\newtheorem{theorem}{Theorem}[section]
\newtheorem{lem}[theorem]{Lemma}
\newtheorem{prop}[theorem]{Proposition}
\newtheorem{thm}[theorem]{Theorem}
\newtheorem{rem}[theorem]{Remark}
\newcommand{\nc}{\newcommand}
\nc\ol{\overline} \nc\wt{\widetilde} \nc\ul{\underline}
\nc{\ZZ}{{\mathbb Z}} \nc{\NN}{{\mathbb N}} \nc{\CC}{{\mathbb C}}
\nc{\QQ}{{\mathbb Q}} \nc{\CP}{{\mathbb {CP}}} \nc{\A}{{\mathcal A}}
\nc\U{{\mathcal U}}  \nc{\N}{{\mathcal N}} \nc{\E}{{\mathcal E}}
\nc{\sS}{{\mathbb S}} \nc{\Y}{{\mathcal Y}} \nc{\SSS}{{\mathcal S}}
\nc\D{{\mathfrak D}} \nc\dd{{\mathfrak d}}\nc{\MM}{{\mathbb M}}
\newcommand\h{\mathfrak h}
\newcommand{\gl}{\mathfrak{gl}}
\newcommand{\ssl}{\mathfrak{sl}}
\newcommand{\Sym}{\mathrm{Sym}}
\newcommand{\spa}{\mathrm{span}}
\newcommand{\tr}{\mathrm{tr}}
\newcommand{\diag}{\mathrm{diag}}
\newcommand{\ad}{\mathop{\rm ad}\nolimits}
\nc{\iso}{{\stackrel{\sim}{\longrightarrow}}}
\begin{document}

\author[Alexander Tsymbaliuk]{Alexander Tsymbaliuk}
 \address{A.~Tsymbaliuk: Simons Center for Geometry and Physics, Stony Brook, NY 11794, USA}
 \email{otsymbaliuk@scgp.stonybrook.edu}

\title[Classical limits]
  {Classical limits of quantum toroidal and affine Yangian algebras}

\begin{abstract}
  In this short article, we compute the \emph{classical limits} of
 the quantum toroidal and affine Yangian algebras of $\ssl_n$ by
 generalizing our arguments for $\gl_1$ from~\cite{T1}
 (an alternative proof for $n>2$ is given in~\cite{VV}).
 We also discuss some consequences of these results.
\end{abstract}

\maketitle

\section*{Introduction}

  The primary purpose of this note is to provide proofs for the
 description of the \emph{classical limits} of the algebras
 $\U^{(n)}_{q,d}$ and $\Y^{(n)}_{h,\beta}$ from~\cite{FT,TB}.
  Here $\U^{(n)}_{q,d}$ and $\Y^{(n)}_{h,\beta}$ are the quantum toroidal
 and the affine Yangian algebras of $\ssl_n$ (if $n\geq 2$) or $\gl_1$ (if $n=1$),
 while \emph{classical limits} refer to the limits of these
 algebras as $q\to 1$ or $h\to 0$, respectively.
  We also discuss the \emph{classical limits} of certain constructions for $\U^{(n)}_{q,d}$.

  The case $n=1$ has been essentially worked out in~\cite{T1}.
 In this note, we follow the same approach to prove the $n>1$ generalizations.
 While writing down this note, we found that the $n\geq 3$ case
 has been considered in~\cite{VV} long time ago
 (to deduce our Theorems~\ref{main 1} and~\ref{main 2},
 one needs to combine~\cite{VV} with~\cite{BGK}).
 Hence, the only essentially new case is $n=2$.
 Meanwhile, we expect our direct arguments to be applicable
 in some other situations of interest.

 This paper is organized as follows:

 $\bullet$
  In Section 1, we recall explicit definitions of the Lie algebras
 $\ddot{u}^{(n)}_d$ and $\ddot{y}^{(n)}_\beta$, whose universal enveloping
 algebras coincide with the \emph{classical limits} of $\U^{(n)}_{q,d}$ and $\Y^{(n)}_{h,\beta}$.
 We also recall the notion of $n\times n$ matrix algebras over the algebras
 of difference/differential operators on $\CC^\times$ and their central extensions, denoted by
 $\bar{\dd}^{(n)}_t$ and $\bar{\D}^{(n)}_s$, respectively.

 $\bullet$
  In Section 2, we establish two key isomorphisms relating the
 \emph{classical limit} Lie algebras $\ddot{u}^{(n)}_d, \ddot{y}^{(n)}_\beta$
 to the aforementioned Lie algebras $\bar{\dd}^{(n)}_{d^n}, \bar{\D}^{(n)}_{n\beta}$.

 $\bullet$
  In Section 3, we discuss the \emph{classical limits} of
 the following constructions for $\U^{(n)}_{q,d}\ (n\geq 2)$:

\noindent
  -- the \emph{vertical} and \emph{horizontal} copies of a quantum
 affine algebra $U_q(\widehat{\gl}_n)$ inside $\U^{(n)}_{q,d}$ from~\cite{FJMM},

\noindent
  -- the Miki's automorphism $\varpi: \U^{(n)}_{q,d}\iso \U^{(n)}_{q,d}$ from~\cite{M},

\noindent
  -- the commutative subalgebras $\A(s_0,\ldots,s_{n-1})$ of $\U^{(n),+}_{q,d}$ from~\cite{FT}.


\subsection*{Acknowledgments}
$\ $

  I would like to thank Boris Feigin who emphasized
 an importance of the \emph{classical limit} considerations.
 Special thanks are due to Benjamin Enriquez who provided valuable
 comments on~\cite{FT} and asked about the \emph{classical limits}
 of the constructions from the \emph{loc.cit.}
 The author is deeply indebted to the anonymous referee for useful comments
 on the first version of the paper.

  The author gratefully acknowledges support from the
 Simons Center for Geometry and Physics, Stony Brook University,
 at which most of the research for this paper was performed.
 This work was partially supported by the NSF Grant DMS--1502497
 and the SUNY Individual Development Award.


\section{Basic constructions}


\subsection{The quantum toroidal algebra $\U^{(n)}_{q,d}$ and the affine Yangian $\Y^{(n)}_{h,\beta}$}
$\ $

  For $n\in \NN$, set $[n]:=\{0,1,\ldots,n-1\}$ viewed as a set of
 $\mathrm{mod}\ n$ residues and $[n]^\times:=[n]\backslash\{0\}$.
 For $n\geq 2$, we set $a_{i,j}:=2\delta_{i,j}-\delta_{i,j+1}-\delta_{i,j-1}$
 and $m_{i,j}:=\delta_{i,j+1}-\delta_{i,j-1}$ for all $i,j\in [n]$.

\medskip
\noindent
 $\circ$
  Given $h,\beta\in \CC$, let $\Y^{(n)}_{h,\beta}$ be the
 affine Yangian of $\ssl_n$ (if $n\geq 2$) or $\gl_1$ (if $n=1$) as considered in~\cite{TB},
 where it was denoted by $\Y^{(n)}_{\beta-h,2h,-\beta-h}$.
 These are unital associative $\CC$-algebras generated by
 $\{x^{\pm}_{i,r}, \xi_{i,r}\}_{i\in [n]}^{r\in \ZZ_+}$
 (here $\ZZ_+:=\NN\cup\{0\}$) and with the defining relations as in~\cite[Sect. 1.2]{TB}.
 We will list these relations only for $h=0$,
 which is of main interest in the current paper.

\medskip
\noindent
 $\circ$
  Given $q,d\in \CC^\times$, let $\U^{(n)}_{q,d}$ be the
 quantum toroidal algebra of $\ssl_n$ (if $n\geq 2$) or $\gl_1$ (if $n=1$)
 as considered in~\cite{FT}
 but without the generators $q^{\pm d_1}, q^{\pm d_2}$ and with $\gamma^{\pm 1/2}=q^{\pm c/2}$.
 These are unital associative $\CC$-algebras generated by
 $\{e_{i,k}, f_{i,k}, h_{i,k}, c\}_{i\in [n]}^{k\in \ZZ}$
 and with the defining relations specified in~\cite{FT}.
 We note that algebras $\U^{(n)}_{\frac{d}{q},q^2,\frac{1}{dq}}$
 from~\cite[Sect. 1.1]{TB} are their central quotients.


\subsection{The Lie algebra $\ddot{u}^{(n)}_d$}
$\ $

  In the $q\to 1$ limit, all the defining relations of $\U^{(n)}_{q,d}$
 become of \emph{Lie type}. Therefore, the $q\to 1$ limit of
 $\U^{(n)}_{q,d}$ is isomorphic to the universal enveloping algebra $U(\ddot{u}^{(n)}_d)$.
  The Lie algebra $\ddot{u}^{(n)}_d$ is generated by
 $\{\bar{e}_{i,k}, \bar{f}_{i,k}, \bar{h}_{i,k}, \bar{c}\}_{i\in [n]}^{k\in \ZZ}$
 with $\bar{c}$ being a central element and the rest of the defining relations (u1--u7.2)
 to be given below in each of the 3 cases of interest: $n>2,\ n=2,\ \mathrm{and}\ n=1$.

 $\bullet$ For $n>2$, the defining relations are
\begin{equation}\tag{u1}\label{u1.1}
 [\bar{h}_{i,k}, \bar{h}_{j,l}]=ka_{i,j}d^{-km_{i,j}}\delta_{k,-l}\bar{c},
\end{equation}
\begin{equation}\tag{u2}\label{u2.1}
 [\bar{e}_{i,k+1}, \bar{e}_{j,l}]=d^{-m_{i,j}}[\bar{e}_{i,k}, \bar{e}_{j,l+1}],
\end{equation}
\begin{equation}\tag{u3}\label{u3.1}
 [\bar{f}_{i,k+1}, \bar{f}_{j,l}]=d^{-m_{i,j}}[\bar{f}_{i,k}, \bar{f}_{j,l+1}],
\end{equation}
\begin{equation}\tag{u4}\label{u4.1}
 [\bar{e}_{i,k}, \bar{f}_{j,l}]=\delta_{i,j}\bar{h}_{i,k+l}+k\delta_{i,j}\delta_{k,-l}\bar{c},
\end{equation}
\begin{equation}\tag{u5}\label{u5.1}
 [\bar{h}_{i,k}, \bar{e}_{j,l}]=a_{i,j}d^{-km_{i,j}}\bar{e}_{j,l+k},
\end{equation}
\begin{equation}\tag{u6}\label{u6.1}
 [\bar{h}_{i,k}, \bar{f}_{j,l}]=-a_{i,j}d^{-km_{i,j}}\bar{f}_{j,l+k},
\end{equation}
\begin{equation}\tag{u7.1}\label{u7.1.1}
 \sum_{\pi\in \Sigma_2}\ [\bar{e}_{i,k_{\pi(1)}}, [\bar{e}_{i,k_{\pi(2)}}, \bar{e}_{i\pm 1,l}]]=0
 \ \mathrm{and}\
 [\bar{e}_{i,k}, \bar{e}_{j,l}]=0\ \mathrm{for}\ j\ne i, i\pm 1,
\end{equation}
\begin{equation}\tag{u7.2}\label{u7.2.1}
 \sum_{\pi\in \Sigma_2}\ [\bar{f}_{i,k_{\pi(1)}}, [\bar{f}_{i,k_{\pi(2)}}, \bar{f}_{i\pm 1,l}]]=0
 \ \mathrm{and}\
 [\bar{f}_{i,k}, \bar{f}_{j,l}]=0\ \mathrm{for}\ j\ne i, i\pm 1.
\end{equation}

 $\bullet$ For $n=2$, the defining relations are
\begin{equation}\tag{u1}\label{u1.2}
 [\bar{h}_{i,k}, \bar{h}_{i,l}]=2k\delta_{k,-l}\bar{c},\
 [\bar{h}_{i,k}, \bar{h}_{i+1,l}]=-k(d^k+d^{-k})\delta_{k,-l}\bar{c},
\end{equation}
\begin{equation}\tag{u2}\label{u2.2}
 [\bar{e}_{i,k+1}, \bar{e}_{i,l}]=[\bar{e}_{i,k}, \bar{e}_{i,l+1}],\
 [\bar{e}_{i,k+2}, \bar{e}_{i+1,l}]-(d+d^{-1})[\bar{e}_{i,k+1}, \bar{e}_{i+1,l+1}]+[\bar{e}_{i,k}, \bar{e}_{i+1,l+2}]=0,
\end{equation}
\begin{equation}\tag{u3}\label{u3.2}
 [\bar{f}_{i,k+1}, \bar{f}_{i,l}]=[\bar{f}_{i,k}, \bar{f}_{i,l+1}],\
 [\bar{f}_{i,k+2}, \bar{f}_{i+1,l}]-(d+d^{-1})[\bar{f}_{i,k+1}, \bar{f}_{i+1,l+1}]+[\bar{f}_{i,k}, \bar{f}_{i+1,l+2}]=0,
\end{equation}
\begin{equation}\tag{u4}\label{u4.2}
 [\bar{e}_{i,k}, \bar{f}_{j,l}]=\delta_{i,j}\bar{h}_{i,k+l}+k\delta_{i,j}\delta_{k,-l}\bar{c},
\end{equation}
\begin{equation}\tag{u5}\label{u5.2}
 [\bar{h}_{i,k}, \bar{e}_{i,l}]=2\bar{e}_{i,l+k},\
 [\bar{h}_{i,k}, \bar{e}_{i+1,l}]=-(d^k+d^{-k})\bar{e}_{i+1,l+k},
\end{equation}
\begin{equation}\tag{u6}\label{u6.2}
 [\bar{h}_{i,k}, \bar{f}_{i,l}]=-2\bar{f}_{i,l+k},\
 [\bar{h}_{i,k}, \bar{f}_{i+1,l}]=(d^k+d^{-k})\bar{f}_{i+1,l+k},
\end{equation}
\begin{equation}\tag{u7.1}\label{u7.1.2}
 \sum_{\pi\in \Sigma_3}\ [\bar{e}_{i,k_{\pi(1)}}, [\bar{e}_{i,k_{\pi(2)}}, [\bar{e}_{i,k_{\pi(3)}}, \bar{e}_{i+1,l}]]]=0,
\end{equation}
\begin{equation}\tag{u7.2}\label{u7.2.2}
 \sum_{\pi\in \Sigma_3}\ [\bar{f}_{i,k_{\pi(1)}}, [\bar{f}_{i,k_{\pi(2)}}, [\bar{f}_{i,k_{\pi(3)}}, \bar{f}_{i+1,l}]]]=0.
\end{equation}

 $\bullet$ For $n=1$, the defining relations are
\begin{equation}\tag{u1}\label{u1.3}
 [\bar{h}_{0,k}, \bar{h}_{0,l}]=k(2-d^k-d^{-k})\delta_{k,-l}\bar{c},
\end{equation}
\begin{equation}\tag{u2}\label{u2.3}
 [\bar{e}_{0,k+3}, \bar{e}_{0,l}]-(1+d+d^{-1})[\bar{e}_{0,k+2}, \bar{e}_{0,l+1}]+
 (1+d+d^{-1})[\bar{e}_{0,k+1}, \bar{e}_{0,l+2}]-[\bar{e}_{0,k}, \bar{e}_{0,l+3}]=0,
\end{equation}
\begin{equation}\tag{u3}\label{u3.3}
 [\bar{f}_{0,k+3}, \bar{f}_{0,l}]-(1+d+d^{-1})[\bar{f}_{0,k+2}, \bar{f}_{0,l+1}]+
 (1+d+d^{-1})[\bar{f}_{0,k+1}, \bar{f}_{0,l+2}]-[\bar{f}_{0,k}, \bar{f}_{0,l+3}]=0,
\end{equation}
\begin{equation}\tag{u4}\label{u4.3}
 [\bar{e}_{0,k}, \bar{f}_{0,l}]=\bar{h}_{0,k+l}+k\delta_{k,-l}\bar{c},
\end{equation}
\begin{equation}\tag{u5}\label{u5.3}
 [\bar{h}_{0,k}, \bar{e}_{0,l}]=(2-d^k-d^{-k})\bar{e}_{0,l+k},
\end{equation}
\begin{equation}\tag{u6}\label{u6.3}
 [\bar{h}_{0,k}, \bar{f}_{0,l}]=-(2-d^k-d^{-k})\bar{f}_{0,l+k},
\end{equation}
\begin{equation}\tag{u7.1}\label{u7.1.3}
 \sum_{\pi\in \Sigma_3}\ [\bar{e}_{0,k_{\pi(1)}}, [\bar{e}_{0,k_{\pi(2)}+1}, \bar{e}_{0,k_{\pi(3)}-1}]]=0,
\end{equation}
\begin{equation}\tag{u7.2}\label{u7.2.3}
 \sum_{\pi\in \Sigma_3}\ [\bar{f}_{0,k_{\pi(1)}}, [\bar{f}_{0,k_{\pi(2)}+1}, \bar{f}_{0,k_{\pi(3)}-1}]]=0.
\end{equation}

 In the above relations $l,k,k_1,k_2,k_3\in \ZZ$ and $\Sigma_s$ is
the symmetric group on $s$ letters.


\subsection{The Lie algebra $\ddot{y}^{(n)}_\beta$}
$\ $

  All the defining relations of $\Y^{(n)}_{h,\beta}$ are well-defined
 for $h=0$ and become of \emph{Lie type}.
 Therefore, $\Y^{(n)}_{0,\beta}\simeq U(\ddot{y}^{(n)}_\beta)$ where
 the Lie algebra $\ddot{y}^{(n)}_\beta$ is generated by
 $\{\bar{x}^\pm_{i,r}, \bar{\xi}_{i,r}\}_{i\in [n]}^{r\in \ZZ_+}$
 with the defining relations (y1--y6) to be given below.
 The first two of them are independent of $n\in \NN:$
\begin{equation}\tag{y1}\label{y1}
 [\bar{\xi}_{i,r}, \bar{\xi}_{j,s}]=0,
\end{equation}
\begin{equation}\tag{y2}\label{y2}
 [\bar{x}^+_{i,r}, \bar{x}^-_{j,s}]=\delta_{i,j}\bar{\xi}_{i,r+s}.
\end{equation}

\noindent
 Let us now specify (y3--y6) in each of the 3 cases of interest: $n>2,\ n=2,\ \mathrm{and}\ n=1$.

 $\bullet$ For $n>2$, the defining relations are
\begin{equation}\tag{y3}\label{y3.1}
 [\bar{x}^{\pm}_{i,r+1}, \bar{x}^{\pm}_{j,s}]-[\bar{x}^{\pm}_{i,r}, \bar{x}^{\pm}_{j,s+1}]=
 -m_{i,j}\beta [\bar{x}^{\pm}_{i,r}, \bar{x}^{\pm}_{j,s}],
\end{equation}
\begin{equation}\tag{y4}\label{y4.1}
 [\bar{\xi}_{i,r+1}, \bar{x}^{\pm}_{j,s}]-[\bar{\xi}_{i,r}, \bar{x}^{\pm}_{j,s+1}]=
 -m_{i,j}\beta[\bar{\xi}_{i,r}, \bar{x}^{\pm}_{j,s}],
\end{equation}
\begin{equation}\tag{y5}\label{y5.1}
 [\bar{\xi}_{i,0}, \bar{x}^{\pm}_{j,s}]=\pm a_{i,j}\bar{x}^{\pm}_{j,s},
\end{equation}
\begin{equation}\tag{y6}\label{y6.1}
 \sum_{\pi\in \Sigma_2}\ [\bar{x}^{\pm}_{i,r_{\pi(1)}}, [\bar{x}^{\pm}_{i,r_{\pi(2)}}, \bar{x}^{\pm}_{i\pm 1,s}]]=0
 \ \mathrm{and}\
 [\bar{x}^{\pm}_{i,r}, \bar{x}^{\pm}_{j,s}]=0\ \mathrm{for}\ j\ne i, i\pm 1.
\end{equation}

 $\bullet$ For $n=2$, the defining relations are
\begin{multline}\tag{y3}\label{y3.2}
 [\bar{x}^{\pm}_{i,r+1}, \bar{x}^{\pm}_{i,s}]=[\bar{x}^{\pm}_{i,r}, \bar{x}^{\pm}_{i,s+1}],\\
 [\bar{x}^{\pm}_{i,r+2}, \bar{x}^{\pm}_{i+1,s}]-2[\bar{x}^{\pm}_{i,r+1}, \bar{x}^{\pm}_{i+1,s+1}]+
 [\bar{x}^{\pm}_{i,r}, \bar{x}^{\pm}_{i+1,s+2}]=
 \beta^2 [\bar{x}^{\pm}_{i,r}, \bar{x}^{\pm}_{i+1,s}],
\end{multline}
\begin{multline}\tag{y4}\label{y4.2}
 [\bar{\xi}_{i,r+1}, \bar{x}^{\pm}_{i,s}]=[\bar{\xi}_{i,r}, \bar{x}^{\pm}_{i,s+1}],\\
 [\bar{\xi}_{i,r+2}, \bar{x}^{\pm}_{i+1,s}]-2[\bar{\xi}_{i,r+1}, \bar{x}^{\pm}_{i+1,s+1}]+
 [\bar{\xi}_{i,r}, \bar{x}^{\pm}_{i+1,s+2}]=
 \beta^2 [\bar{\xi}_{i,r}, \bar{x}^{\pm}_{i+1,s}],
\end{multline}
\begin{equation}\tag{y5}\label{y5.2}
 [\bar{\xi}_{i,0}, \bar{x}^{\pm}_{j,s}]=\pm a_{i,j}\bar{x}^{\pm}_{j,s},\
 [\bar{\xi}_{i,1},\bar{x}^{\pm}_{i+1,s}]=\mp 2 \bar{x}^{\pm}_{i+1,s+1},
\end{equation}
\begin{equation}\tag{y6}\label{y6.2}
 \sum_{\pi\in \Sigma_3}\ [\bar{x}^{\pm}_{i,r_{\pi(1)}}, [\bar{x}^{\pm}_{i,r_{\pi(2)}}, [\bar{x}^{\pm}_{i,r_{\pi(3)}}, \bar{x}^{\pm}_{i+1,s}]]]=0.
\end{equation}

 $\bullet$ For $n=1$, the defining relations are
\begin{multline}\tag{y3}\label{y3.3}
 [\bar{x}^{\pm}_{0,r+3}, \bar{x}^{\pm}_{0,s}]-3[\bar{x}^{\pm}_{0,r+2}, \bar{x}^{\pm}_{0,s+1}]+
 3[\bar{x}^{\pm}_{0,r+1}, \bar{x}^{\pm}_{0,s+2}]-[\bar{x}^{\pm}_{0,r}, \bar{x}^{\pm}_{0,s+3}]=\\
 \beta^2([\bar{x}^{\pm}_{0,r+1}, \bar{x}^{\pm}_{0,s}]-[\bar{x}^{\pm}_{0,r}, \bar{x}^{\pm}_{0,s+1}]),
\end{multline}
\begin{multline}\tag{y4}\label{y4.3}
 [\bar{\xi}_{0,r+3}, \bar{x}^{\pm}_{0,s}]-3[\bar{\xi}_{0,r+2}, \bar{x}^{\pm}_{0,s+1}]+
 3[\bar{\xi}_{0,r+1}, \bar{x}^{\pm}_{0,s+2}]-[\bar{\xi}_{0,r}, \bar{x}^{\pm}_{0,s+3}]=\\
 \beta^2([\bar{\xi}_{0,r+1}, \bar{x}^{\pm}_{0,s}]-[\bar{\xi}_{0,r}, \bar{x}^{\pm}_{0,s+1}]),
\end{multline}
\begin{equation}\tag{y5}\label{y5.3}
 [\bar{\xi}_{0,0}, \bar{x}^{\pm}_{0,s}]=0,\ [\bar{\xi}_{0,1}, \bar{x}^{\pm}_{0,s}]=0,\
 [\bar{\xi}_{0,2}, \bar{x}^{\pm}_{0,s}]=\mp 2\beta^2\bar{x}^{\pm}_{0,s},
\end{equation}
\begin{equation}\tag{y6}\label{y6.3}
 \sum_{\pi\in \Sigma_3}\ [\bar{x}^{\pm}_{0,r_{\pi(1)}}, [\bar{x}^{\pm}_{0,r_{\pi(2)}}, \bar{x}^{\pm}_{0,r_{\pi(3)}+1}]]=0.
\end{equation}

In the above relations $s,r,r_1,r_2,r_3\in \ZZ_+$ and $i,j\in[n]$.

\begin{rem}
 For $\beta\ne 0$, the assignment
   $\bar{x}^{\pm}_{i,r}\mapsto \beta^r \bar{x}^{\pm}_{i,r},\ \bar{\xi}_{i,r}\mapsto \beta^r \bar{\xi}_{i,r}$
 (with $i\in [n],r\in\ZZ_+$) provides an isomorphism
 of Lie algebras $\ddot{y}^{(n)}_\beta\iso \ddot{y}^{(n)}_1$.
\end{rem}


\subsection{Difference operators on $\CC^\times$}
$\ $

  For $t\in \CC^\times$, define the algebra of \emph{$t$-difference operators on $\CC^\times$},
 denoted by $\dd_t$, to be the unital associative $\CC$-algebra
 generated by $Z^{\pm 1}, D^{\pm 1}$ with the defining relations
   $$Z^{\pm 1}Z^{\mp 1}=1,\ D^{\pm 1}D^{\mp 1}=1,\ DZ=t\cdot ZD.$$
  Define the associative algebra $\dd^{(n)}_t:=\MM_n\otimes \dd_t$,
 where $\MM_n$ stands for the algebra of $n\times n$ matrices
 (so that $\dd^{(n)}_t$ is the algebra of $n\times n$ matrices with values in $\dd_t$).
 We will view $\dd^{(n)}_t$ as a Lie algebra with the natural commutator-Lie bracket $[\cdot,\cdot]$.
 It is easy to check that the following formulas
 define two 2-cocycles $\phi^{(1)}, \phi^{(2)}\in C^2(\dd^{(n)}_t,\CC)$:
  $$\phi^{(1)}(M_1\otimes D^{k_1}Z^{l_1},M_2\otimes D^{k_2}Z^{l_2})=
    l_1t^{k_1l_1}\delta_{k_1,-k_2}\delta_{l_1,-l_2}\tr(M_1M_2),$$
  $$\phi^{(2)}(M_1\otimes D^{k_1}Z^{l_1},M_2\otimes D^{k_2}Z^{l_2})=
    k_1t^{k_1l_1}\delta_{k_1,-k_2}\delta_{l_1,-l_2}\tr(M_1M_2)$$
 for any $M_1,M_2\in \MM_n$ and $k_1,k_2,l_1,l_2\in \ZZ$.

  This endows
   $\bar{\dd}^{(n)}_t:=\dd^{(n)}_t\oplus \CC\cdot c^{(1)}_\dd\oplus \CC\cdot c^{(2)}_{\dd}$
  with the Lie algebra structure via
    $$[X+\lambda_1c^{(1)}_\dd+\lambda_2c^{(2)}_\dd,  Y+\mu_1c^{(1)}_\dd+\mu_2c^{(2)}_\dd]=
      XY-YX+\phi^{(1)}(X,Y)c^{(1)}_\dd+\phi^{(2)}(X,Y)c^{(2)}_\dd$$
  for any $X,Y\in \dd^{(n)}_t$ and $\lambda_1,\lambda_2,\mu_1,\mu_2\in \CC$.
  We also define a Lie subalgebra $\bar{\dd}^{(n),0}_t\subset \bar{\dd}^{(n)}_t$ via
  $$\bar{\dd}^{(n),0}_t:=
    \left\{\sum A_{k,l}D^kZ^l+\lambda_1c^{(1)}_\dd+\lambda_2c^{(2)}_\dd\in \bar{\dd}^{(n)}_t|
            \lambda_1,\lambda_2\in \CC, A_{k,l}\in \MM_n, \tr(A_{0,0})=0\right\}.$$


\subsection{Differential operators on $\CC^\times$}
$\ $

   For $s\in \CC$, define the algebra of \emph{$s$-differential operators on $\CC^\times$},
 denoted by $\D_s$, to be the unital associative $\CC$-algebra
 generated by $\partial, x^{\pm 1}$ with the defining relations
   $$x^{\pm 1}x^{\mp 1}=1,\ \partial x=x (\partial+s).$$
 Define the associative algebra $\D^{(n)}_s:=\MM_n\otimes \D_s$
 (so that $\D^{(n)}_s$ is the algebra of $n\times n$ matrices with values in $\D_s$).
 We will view $\D^{(n)}_s$ as a Lie algebra with the natural commutator-Lie bracket.
 Following~\cite[Formula (2.3)]{BKLY}, consider a 2-cocycle $\phi\in C^2(\D^{(n)}_s,\CC)$ given by
  $$\phi (M_1\otimes f_1(\partial)x^{l_1}, M_2\otimes f_2(\partial)x^{l_2})=
    \left\{
     \begin{array}{llr}
        \tr(M_1M_2)\cdot \sum_{a=0}^{l_1-1}f_1(as)f_2((a-l_1)s) & \text{if}\ l_1=-l_2>0\\
        -\tr(M_1M_2)\cdot \sum_{a=0}^{-l_1-1}f_2(as)f_1((a+l_1)s) & \text{if}\ l_1=-l_2<0\\
        0 & \text{otherwise}
     \end{array}
    \right.$$
  for arbitrary polynomials $f_1,f_2$ and any $M_1,M_2\in \MM_n,\ l_1,l_2\in \ZZ$.

  This endows $\bar{\D}^{(n)}_s:=\D^{(n)}_s\oplus \CC\cdot c_\D$
  with the Lie algebra structure via
   $$[X+\lambda c_\D, Y+\mu c_\D]=XY-YX+\phi(X,Y) c_\D$$
  for any $X,Y\in \D^{(n)}_s$ and  $\lambda,\mu\in \CC.$


\medskip

\section{Key isomorphisms}


\subsection{Main results}
$\ $

  Our first main result establishes a relation between the Lie algebras
 $\ddot{u}^{(n)}_d$ and $\bar{\dd}^{(n)}_t$.

\begin{thm}\label{main 1}
 For $d\in \CC^\times$ not a root of unity (we will denote this by $d\ne \sqrt{1}$), the assignment
  $$\bar{e}_{0,k}\mapsto E_{n,1}\otimes D^kZ,\
    \bar{f}_{0,k}\mapsto E_{1,n}\otimes Z^{-1}D^k,\
    \bar{h}_{0,k}\mapsto E_{n,n}\otimes D^k-d^{nk}E_{1,1}\otimes D^k+\delta_{0,k}c^{(1)}_{\dd},\
    \bar{c}\mapsto c^{(2)}_{\dd},$$
  $$\bar{e}_{i,k}\mapsto d^{(n-i)k}E_{i,i+1}\otimes D^k,\
    \bar{f}_{i,k}\mapsto d^{(n-i)k}E_{i+1,i}\otimes D^k,\
    \bar{h}_{i,k}\mapsto d^{(n-i)k}(E_{i,i}-E_{i+1,i+1})\otimes D^k$$
 (with $i\in [n]^\times, k\in \ZZ$) provides an isomorphism of Lie algebras
 $\theta^{(n)}_d: \ddot{u}^{(n)}_d\iso \bar{\dd}^{(n),0}_{d^n}$.
\end{thm}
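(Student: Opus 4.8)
The plan is to establish $\theta^{(n)}_d$ as a well-defined Lie algebra homomorphism and then to prove that it is bijective; since surjectivity is fairly direct, injectivity will be the crux. Throughout I abbreviate $t:=d^n$ and record that the hypothesis $d\ne\sqrt{1}$ forces $t^m=d^{nm}\ne 1$ for all $m\ne 0$, a fact that will be used repeatedly.

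First I would check that the assignment respects each defining relation (u1)--(u7.2), in all three regimes $n>2$, $n=2$, $n=1$. The verification rests on two inputs: the relation $DZ=tZD$ of $\dd_t$, so that reordering $D^a$ past $Z$ produces the scalars $t^{\pm a}$ (these become the factors $d^{-km_{i,j}}$, $d^k+d^{-k}$, $1+d+d^{-1}$, \emph{etc.} on the right-hand sides), and the two cocycles $\phi^{(1)},\phi^{(2)}$ (which produce the central terms). For the finite generators $i,j\in[n]^\times$ the relations reduce to identities in $\MM_n\otimes\CC[D^{\pm1}]$: the matrix brackets $[E_{i,i+1},E_{j,j+1}]$ reproduce the Cartan-matrix combinatorics, the scalars $d^{(n-i)k}$ are tuned so that (u2)--(u3) hold precisely because $m_{i,j}=i-j$ whenever $[E_{i,i+1},E_{j,j+1}]\ne 0$, and the Serre relations (u7) follow from nilpotency of the $\ssl_2$-triples $E_{i,i+1},E_{i+1,i}$. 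The affine node $i=0$, carrying $Z^{\pm1}$, is where the specialization $t=d^n$ enters: the $t$-powers from reordering $Z$ past $D$ are exactly what the formulas demand, and the cocycle values match the central contributions, with $\bar c\mapsto c^{(2)}_\dd$ and with $\phi^{(1)}$ supplying the $c^{(1)}_\dd$ hidden in the image of $\bar h_{0,0}$. One also checks $\tr(A_{0,0})=0$ on every image, so that $\theta^{(n)}_d$ indeed lands in $\bar\dd^{(n),0}_{d^n}$.

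Next, surjectivity. The images of $\bar e_{i,k},\bar f_{i,k}$ with $i\in[n]^\times$, together with their brackets, generate $\ssl_n\otimes D^k$ inside the $Z^0$-sector; adjoining the affine images $E_{n,1}\otimes D^kZ$ and $E_{1,n}\otimes Z^{-1}D^k$ and bracketing repeatedly produces every $E_{a,b}\otimes D^mZ^l$ with $a\ne b$, by induction on $|l|$. To recover the diagonal, and in particular the scalar matrices in each nonzero bidegree, I would pair $E_{a,b}\otimes D^mZ^l$ against $E_{b,a}\otimes D^{m'}Z^{-l}$: the matrix commutator $E_{a,a}-E_{b,b}$ appears dressed with a factor $t^{\pm(\cdots)}$ coming from reordering $Z$ past $D$, and since $t^m\ne 1$ for $m\ne 0$ these relations are nondegenerate enough to solve for the individual $E_{a,a}\otimes D^mZ^l$ once $(m,l)\ne(0,0)$. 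Finally $c^{(2)}_\dd=\theta^{(n)}_d(\bar c)$, while $c^{(1)}_\dd=\theta^{(n)}_d\big(\bar h_{0,0}+\sum_{i\in[n]^\times}\bar h_{i,0}\big)$ because the diagonal parts telescope. This is exactly the step in which $d\ne\sqrt{1}$ is indispensable.

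The main obstacle is injectivity, which I would deduce from a graded-dimension comparison. Both sides carry a grading by $\hat Q\times\ZZ$, where $\hat Q$ is the affine root lattice ($\deg\bar e_{i,k}=\alpha_i$, $\deg\bar h_{i,k}=0=\deg\bar c$) and the second factor records the $D$-degree $k$; the map $\theta^{(n)}_d$ is homogeneous, sending $\alpha_0$ to $Z$-degree $1$ and the finite weights to $\MM_n$-weights. Each graded component of $\bar\dd^{(n),0}_{d^n}$ is finite-dimensional and explicit: one-dimensional on the real (off-diagonal) weights, and spanned by diagonal matrices together with $c^{(1)}_\dd,c^{(2)}_\dd$ on the imaginary (zero-$\MM_n$-weight) weights. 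Having already proved surjectivity, it suffices to bound $\dim(\ddot{u}^{(n)}_d)_\gamma$ from above by the corresponding target dimension in each degree $\gamma$; then the spanning set maps onto a basis and must itself be a basis, forcing injectivity. To obtain these bounds I would use (u5)--(u6) to absorb every $\bar h$ into a degree-shift of a single $\bar e$ or $\bar f$, use (u2)--(u3) to normalize the $D$-degrees inside a bracket, and use the Serre relations (u7) to truncate longer Lie words. The genuinely delicate case is the imaginary-root (zero-weight) spaces, where one must check that iterated brackets of the generators never exceed the Cartan-type spaces predicted by the target; carrying out this count, following the $\gl_1$ argument of~\cite{T1}, is the heart of the proof.
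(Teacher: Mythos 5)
Your overall skeleton --- verify the relations, prove surjectivity, then bound the graded dimensions of the source by those of the target so that the surjection in each finite-dimensional graded piece is forced to be bijective --- is exactly the paper's strategy (the paper implements it on the central quotient $\ul{\ddot{u}}^{(n)}_d:=\ddot{u}^{(n)}_d/(\bar{c},\sum_i\bar{h}_{i,0})$ mapped to $\dd^{(n),0}_{d^n}$, which is equivalent to your telescoping observation about the two central elements), and the real-root spaces are handled the same way, via the module structure over the horizontal $\widehat{\ssl}_n$ as in Moody--Rao--Yokonuma.

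However, there is a genuine gap precisely at the step you yourself defer: the bound $\dim(\ul{\ddot{u}}^{(n)}_d)_{(l\delta;k)}\leq n$ on the imaginary-root spaces. Your proposed mechanism --- absorb the $\bar{h}$'s via (u5)--(u6), normalize $D$-degrees via (u2)--(u3), truncate via the Serre relations --- does not yield this bound: those relations only show that the $(l\delta;k)$ component is spanned by the infinitely many length-$ln$ commutators $v^{(i,l)}_{a,k-a}=[\bar{e}_{i,a};\bar{e}_{i+1,0};\ldots;\bar{e}_{i-2,0};\bar{e}_{i-1,k-a}]_{ln}$, $a\in\ZZ$, and the entire difficulty is to prove that this system has rank at most $n$. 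The paper's argument for this requires ingredients absent from your sketch, which also do not transfer verbatim from the $\gl_1$ case of~\cite{T1}: (i) a shift-operator lemma producing $\bar{h}'_{i,k}\in\spa_\CC\langle\bar{h}_{0,k},\ldots,\bar{h}_{n-1,k}\rangle$ with $[\bar{h}'_{i,k},\bar{e}_{j,l}]=\delta_{i,j}\bar{e}_{j,l+k}$, whose existence rests on the invertibility of the matrices $(a_{i,j}d^{km_{i,j}})$, resp.\ $(2\delta_{i,j}-(d^k+d^{-k})\delta_{i,j+1})$ for $n=2$ --- so $d\ne\sqrt{1}$ is indispensable for the injectivity half too, contrary to your remark that it is needed only for surjectivity; (ii) the seed identity $\sum_{i\in[n]}v^{(i,l)}_{0,0}=0$, obtained by identifying the horizontal subalgebra with $\ssl_n[Z,Z^{-1}]$; (iii) the operators $L_{i;k,l}=P_{k,l}(\ad(\bar{h}'_{i,1}),\ldots,\ad(\bar{h}'_{i,k}))$ built from symmetric polynomials, needed both to generate the spanning vectors in $D$-degree $0<k<l$ and to reduce arbitrary $k$ to $k\ \mathrm{mod}\ l$; and (iv) non-vanishing checks of images under $\ul{\theta}^{(n)}_d$ at every inductive step, plus a separate treatment of the degenerate case $(n,l)=(2,1)$ via (u2). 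Without carrying out (i)--(iv), the dimension estimate, and hence injectivity, remains unproven, so the proposal as written is an outline of the correct strategy rather than a proof.
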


  Our second main result establishes a relation between the Lie algebras
 $\ddot{y}^{(n)}_\beta$ and $\bar{\D}^{(n)}_s$.

\begin{thm}\label{main 2}
 For $\beta\ne 0$, the assignment
  $$\bar{x}^+_{0,r}\mapsto E_{n,1}\otimes \partial^rx,\
    \bar{x}^-_{0,r}\mapsto E_{1,n}\otimes x^{-1}\partial^r,\
    \bar{\xi}_{0,r}\mapsto E_{n,n}\otimes \partial^r-E_{1,1}\otimes (\partial+n\beta)^r+\delta_{0,r}c_{\D},$$
  $$\bar{x}^+_{i,r}\mapsto E_{i,i+1}\otimes (\partial+(n-i)\beta)^r,
    \bar{x}^-_{i,r}\mapsto E_{i+1,i}\otimes (\partial+(n-i)\beta)^r,
    \bar{\xi}_{i,r}\mapsto (E_{i,i}-E_{i+1,i+1})\otimes (\partial+(n-i)\beta)^r$$
 (with $i\in [n]^\times, r\in \ZZ_+$) provides an isomorphism of Lie algebras
 $\vartheta^{(n)}_\beta: \ddot{y}^{(n)}_\beta\iso \bar{\D}^{(n)}_{n\beta}$.
\end{thm}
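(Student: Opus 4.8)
The plan is to show that $\vartheta^{(n)}_\beta$ is a well-defined Lie algebra homomorphism, and then that it is both surjective and injective. By the Remark preceding Section~1.4 the rescaling $\bar{x}^\pm_{i,r}\mapsto\beta^r\bar{x}^\pm_{i,r},\ \bar{\xi}_{i,r}\mapsto\beta^r\bar{\xi}_{i,r}$ identifies $\ddot{y}^{(n)}_\beta$ with $\ddot{y}^{(n)}_1$, and the substitution $\partial\mapsto\beta\partial$ identifies $\bar{\D}^{(n)}_{n}$ with $\bar{\D}^{(n)}_{n\beta}$, so one may assume $\beta=1$; I keep $\beta$ general below. The single computational engine for checking the relations is the identity $f(\partial)x=xf(\partial+n\beta)$ (equivalently $xf(\partial)=f(\partial-n\beta)x$) in $\D_{n\beta}$, together with the trace cocycle $\phi$. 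For $i\in[n]^\times$ the products in question only involve matrix units $E_{i,i+1},E_{i+1,i+2},\ldots$ tensored with polynomials in the single shifted variable $\partial+(n-i)\beta$; since consecutive shifts obey $(\partial+(n-i)\beta)-(\partial+(n-i-1)\beta)=\beta$, the telescoping in (y3) and (y4) produces exactly the coefficients $-m_{i,j}\beta$ (respectively the $\beta^2$ terms for $n=2$ and $n=1$), and (y5), (y6) reduce to elementary polynomial identities. For the affine node, relation (y2) with $i=j=0$ gives $[E_{n,1}\otimes\partial^r x,\ E_{1,n}\otimes x^{-1}\partial^t]=E_{n,n}\otimes\partial^{r+t}-E_{1,1}\otimes(\partial+n\beta)^{r+t}$ via $x^{-1}\partial^{r+t}x=(\partial+n\beta)^{r+t}$, while the cocycle contributes $c_{\D}$ with coefficient $\tr(E_{n,1}E_{1,n})\sum_{a=0}^{0}1=1$ precisely when $r=t=0$, matching the $\delta_{0,r+t}$ in the image of $\bar{\xi}_{0,r+t}$; the remaining relations involving $\bar{x}^\pm_0$ and the Serre relations (y6) follow from the same substitution rule, the cubic and quartic Serre expressions vanishing after $\Sigma_3$-symmetrization by a short polynomial computation.

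For surjectivity, note first that $c_{\D}$ lies in the image by the $r=t=0$ case above. The elements $\bar{\xi}_{i,r}$ with $i\in[n]^\times$ map to $(E_{i,i}-E_{i+1,i+1})\otimes(\partial+(n-i)\beta)^r$, whose span over $r\ge 0$ is the trace-zero diagonal part of $\D^{(n)}_{n\beta}$ in $x$-degree $0$, while $\bar{\xi}_{0,r}$ supplies the complementary nonzero-trace direction through $\partial^r-(\partial+n\beta)^r$, a nonzero polynomial for every $r\ge 1$. The images of $\bar{x}^+_i$ (respectively $\bar{x}^-_i$), $i\in[n]^\times$, are the superdiagonal (respectively subdiagonal) matrix units tensored with $\CC[\partial]$, and their iterated brackets generate all $E_{a,b}\otimes\CC[\partial]$ in $x$-degree $0$. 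Finally $\bar{x}^+_0$ and $\bar{x}^-_0$ raise and lower the $x$-degree by $1$, so bracketing them against the $x$-degree-$0$ part reaches every $E_{a,b}\otimes\partial^m x^l$. Hence $\vartheta^{(n)}_\beta$ is onto $\bar{\D}^{(n)}_{n\beta}$.

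The \textbf{main obstacle} is injectivity, i.e. verifying that (y1)--(y6) form a complete set of relations. I would grade both Lie algebras by the root lattice of affine $\gl_n$, concretely by the pair consisting of the weight $\epsilon_a-\epsilon_b$ of $E_{a,b}$ and the $x$-degree $l$, under which $\vartheta^{(n)}_\beta$ is homogeneous: the generators $\bar{x}^+_i$ ($i\in[n]^\times$) carry weight $\epsilon_i-\epsilon_{i+1}$ and $x$-degree $0$, while $\bar{x}^+_0$ carries weight $\epsilon_n-\epsilon_1$ and $x$-degree $+1$. On the degree-zero (Cartan) piece, relation (y1) shows the span of the $\bar{\xi}_{i,r}$ is abelian, and its image is linearly independent in $\bar{\D}^{(n)}_{n\beta}$ because, reading off the diagonal matrix-unit coefficients, the polynomials $\{(\partial+(n-i)\beta)^r\}_{r\ge 0}$ form a basis of $\CC[\partial]$ for each fixed $i$; thus $\vartheta^{(n)}_\beta$ is injective in degree zero. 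For the nonzero graded components I would use (y2)--(y6) to reduce an arbitrary iterated bracket of the generators to a normal form spanning each weight space, thereby bounding $\dim(\ddot{y}^{(n)}_\beta)_\alpha\le\dim(\bar{\D}^{(n)}_{n\beta})_\alpha$; combined with the surjectivity above, this forces $\vartheta^{(n)}_\beta$ to be bijective on every graded piece. Establishing this upper bound is the technical heart of the argument: for $n>2$ the required completeness can be imported from~\cite{VV} together with~\cite{BGK}, whereas the $n=2$ case, and a uniform treatment, is obtained by the explicit reduction mirroring the $\gl_1$ computation of~\cite{T1}.
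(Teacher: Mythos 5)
Your verification of relations (y1)--(y6) and your surjectivity sketch are fine and agree in substance with the paper (which dismisses well-definedness as straightforward and obtains surjectivity graded piece by graded piece). The genuine gap is in your injectivity argument. You grade both Lie algebras only by the affine root lattice (weight together with $x$-degree), but these homogeneous pieces do not see the power of $\partial$: for fixed $a,b,l$ the piece of $\bar{\D}^{(n)}_{n\beta}$ containing $E_{a,b}\otimes\partial^r x^l$ contains it for \emph{all} $r\in\ZZ_+$, so every nonzero graded component on both sides is infinite-dimensional. Consequently your concluding step --- ``$\dim(\ddot{y}^{(n)}_\beta)_\alpha\le\dim(\bar{\D}^{(n)}_{n\beta})_\alpha$ combined with surjectivity forces bijectivity on every graded piece'' --- is invalid: a surjection between vector spaces of equal infinite dimension can have a huge kernel. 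Nor can this be repaired by refining the grading using the second index $r$, because relations (y3)--(y5) are inhomogeneous in $r$ (e.g.\ $[\bar{x}^{\pm}_{i,r+1},\bar{x}^{\pm}_{j,s}]-[\bar{x}^{\pm}_{i,r},\bar{x}^{\pm}_{j,s+1}]=-m_{i,j}\beta[\bar{x}^{\pm}_{i,r},\bar{x}^{\pm}_{j,s}]$ mixes $\deg_2=r+s+1$ with $\deg_2=r+s$), so $r$ induces only a $\ZZ_+$-\emph{filtration}. This is exactly the point of the paper's setup: it passes to the central quotient $\ul{\ddot{y}}^{(n)}_\beta=\ddot{y}^{(n)}_\beta/(\sum_i\bar{\xi}_{i,0})$, equips both sides with the $Q$-grading \emph{and} the $\ZZ_+$-filtration, and proves that the induced maps $\ul{\vartheta}^{(n)}_{\beta;\alpha,k}$ on the successive filtration quotients are surjective and satisfy the dimension inequality $(\ddag)$ --- these quotients have finite-dimensional differences, which is what legitimately upgrades surjectivity to bijectivity.

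Beyond this structural flaw, the technical heart of the proof is absent from your proposal. For $\alpha=l\delta$ one must show that the filtration-$\le k$ part of the imaginary-root space is spanned, modulo lower filtration, by $n$ specific length-$ln$ commutators whose leading terms are linearly independent; for $n=2$ this is Proposition~\ref{estimate}: part (a) is an induction carried out by applying the shift elements $H_3=\frac{-1}{6\beta^2}\sum_{i}\bar{\xi}_{i,3}$ and $H_4=\frac{-1}{12\beta^2}\sum_{i}\bar{\xi}_{i,4}+\frac{1}{12}\sum_{i}\bar{\xi}_{i,2}$ to the identity $\sum_{i}w^{(i,l)}_{0,0}=0$, and part (b) is an explicit leading-term computation in $\D^{(2)}_{2\beta}$. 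Your phrase ``reduce to a normal form mirroring the $\gl_1$ computation of [T1]'' names this strategy but executes none of it, and your fallback to [VV] plus [BGK] covers only $n>2$; the case $n=2$ --- which the paper identifies as the essentially new one --- is precisely where the omitted work lies.
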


  For $n=1$, these isomorphisms have been essentially established in~\cite{T1}.
 In the rest of this section, we adapt arguments from~\cite{T1} to
 prove the above results for $n\geq 2$.

\begin{rem}
  These two theorems played a crucial role in~\cite{TB}, while their proofs were missing.
 In the \emph{loc. cit.}, we considered the quotients $\ddot{u}^{(n)}_d/(\bar{c})$ and
 $\bar{\dd}^{(n),0}_{d^n}/(c^{(2)}_\dd)$ and had a different 2-cocycle.
 Nevertheless, Theorem 2.8 from~\cite{TB} is equivalent to the above Theorem~\ref{main 1}.
\end{rem}


\subsection{Proof of Theorem~\ref{main 1}}
$\ $

  It is straightforward to see that the assignment from
 Theorem~\ref{main 1} preserves all the defining relations (u1--u7.2),
 hence, it provides a Lie algebra homomorphism
 $\theta^{(n)}_d: \ddot{u}^{(n)}_d\to \bar{\dd}^{(n),0}_{d^n}$.
 We also consider the induced homomorphism
 $\ul{\theta}^{(n)}_d: \ul{\ddot{u}}^{(n)}_d\to \dd^{(n),0}_{d^n}$, where
 $\ul{\ddot{u}}^{(n)}_d:=\ddot{u}^{(n)}_d/(\bar{c},\sum_i \bar{h}_{i,0})$
 is a central quotient of $\ddot{u}^{(n)}_d$.
  Clearly, it suffices to show that $\ul{\theta}^{(n)}_d$ is an isomorphism.

  Let $Q$ be the root lattice of $\widehat{\ssl}_n$.
 The Lie algebras $\ul{\ddot{u}}^{(n)}_d$ and $\dd^{(n)}_{d^n}$ are $Q\times \ZZ$--graded via
  $$\deg(\bar{e}_{i,k})=(\alpha_i;k),\
    \deg(\bar{f}_{i,k})=(-\alpha_i;k),\
    \deg(\bar{h}_{i,k})=(0;k),$$
  $$\deg(E_{i,j}\otimes D^kZ^l)=(l\delta+(\alpha_1+\ldots+\alpha_{j-1})-(\alpha_1+\ldots+\alpha_{i-1});k),$$
 where $\alpha_0,\alpha_1,\ldots,\alpha_{n-1}$ are the simple positive roots of $\widehat{\ssl}_n$,
 while $\delta=\alpha_0+\ldots+\alpha_{n-1}$ is the minimal positive imaginary root.
 Note that $\ul{\theta}^{(n)}_d$ is $Q\times \ZZ$--graded, and
 it is easy to see that $\ul{\theta}^{(n)}_d$ is surjective for $d\ne \sqrt{1}$.
  Therefore, it suffices to prove
\begin{equation}\tag{$\dag$}
 \dim(\ul{\ddot{u}}^{(n)}_d)_{(\alpha;k)}\leq \dim(\dd^{(n),0}_{d^n})_{(\alpha;k)}
\end{equation}
 for any $\alpha\in Q,\ k\in \ZZ$.
 Note that
  $$\dim(\dd^{(n),0}_{d^n})_{(\alpha;k)}=
   \begin{cases}
      0  & \text{if}\ \ \alpha\ \mathrm{is\ nonzero\ and\ is\ not\ a\ root\ of}\ \widehat{\ssl}_n\\
      1  & \text{if}\ \ \alpha\ \mathrm{is\ a\ real\ root\ of}\ \widehat{\ssl}_n\\
      n  & \text{if}\ \ \alpha\in \ZZ\delta\ \mathrm{and}\ (\alpha;k)\ne (0;0)\\
     n-1 & \text{if}\ \ (\alpha;k)=(0;0)
   \end{cases}.$$

  For $\alpha\notin \ZZ\delta$, the inequality ($\dag$) can be proved analogously
 to~\cite[Proposition 3.2]{MRY}\footnote{\ The argument in the \emph{loc. cit.} used
 the extra relation $[\bar{e}_{j,a},\bar{e}_{j,b}]=0$ for any $j\in [n], a,b\in \ZZ$ (with $n>1$). However,
 this relation is a simple consequence of (u2).} by viewing $\ddot{u}^{(n)}_d$ as a module
 over the \emph{horizontal} subalgebra generated by
 $\{\bar{e}_{i,0}, \bar{f}_{i,0}, \bar{h}_{i,0}\}_{i\in[n]}$,
 which is isomorphic to $\widehat{\ssl}_n$.
  Hence, it remains to handle the case $\alpha=l\delta$.
 The case $l=0$ is obvious since $(\ul{\ddot{u}}^{(n)}_d)_{(0;k)}$ is spanned
 by $\{\bar{h}_{i,k}\}_{i\in [n]}$. For the rest of the proof, we can assume $l\in \NN$.

\begin{rem}
  For $n>2$, this step is different from the argument in~\cite[Sect. 13]{VV}, where the authors
 prove that $\ddot{u}^{(n)}_d$ is the universal central extension of $\dd^{(n),0}_{d^n}$
 by showing that the former does not admit non-split central extensions.
\end{rem}

  Let $\ul{\ddot{u}}^{(n),\geq}_d$ be the subalgebra of $\ul{\ddot{u}}^{(n)}_d$ generated by
 $\{\bar{e}_{i,k},\bar{h}_{i,k}\}_{i\in [n]}^{k\in \ZZ}$.
 It is isomorphic to an abstract Lie algebra generated by
 $\{\bar{e}_{i,k},\bar{h}_{i,k}\}_{i\in [n]}^{k\in \ZZ}$
 subject to the defining relations (u1,u2,u5,u7.1) with $\bar{c}=0$ and $\sum_i \bar{h}_{i,0}=0$.
 It suffices to show that $\dim(\ul{\ddot{u}}^{(n),\geq}_d)_{(l\delta;k)}\leq n$ for any $l\in \NN, k\in \ZZ$.

  Introduce the \emph{length $N$ commutator}:
   $[a_1;a_2;\ldots;a_{N-1};a_N]_N:=[a_1,[a_2,[\ldots[a_{N-1},a_N]\ldots]]]$.
  We say that this commutator \emph{starts from} $a_1$.
 The degree $(l\delta;k)$ subspace of $\ul{\ddot{u}}^{(n),\geq}_d$
 is spanned by length $ln$ commutators
 $[\bar{e}_{i_1,k_1};\ldots;\bar{e}_{i_{ln},k_{ln}}]_{ln}$
 such that $k_1+\ldots+k_{ln}=k$ and $\alpha_{i_1}+\ldots+\alpha_{i_{ln}}=l\delta$.
 Define
   $v^{(i,l)}_{a,b}:=[\bar{e}_{i,a};\bar{e}_{i+1,0};\ldots;\bar{e}_{i-2,0};\bar{e}_{i-1,b}]_{ln}.$
 Note that $v^{(i,l)}_{a,b}\in (\ul{\ddot{u}}^{(n),\geq}_d)_{(l\delta;a+b)}$ and
   $v^{(i,l)}_{a,b}\ne 0$ since $\ul{\theta}^{(n)}_d(v^{(i,l)}_{a,b})\ne 0$.
 Combining this with $\dim(\ul{\ddot{u}}^{(n)}_d)_{(l\delta-\alpha_{i_1};k-k_1)}\leq 1$,
 we see that $(\ul{\ddot{u}}^{(n),\geq}_d)_{(l\delta;k)}$ is spanned by
   $\{v^{(i,l)}_{a,k-a}\}_{i\in [n]}^{a\in \ZZ}$.
 It remains to show that the rank of this system is at most $n$.

\medskip
\noindent
 $\bullet$ \emph{Case $k=0$.}

  Define $v_1:=v^{(1,l)}_{0,0},\ldots,v_{n-1}:=v^{(n-1,l)}_{0,0}, v_n:=v^{(0,l)}_{1,-1}$
 and set $V(l;0):=\spa_{\CC}\langle v_1,\ldots,v_n \rangle$.
 We prove $v^{(i,l)}_{a,-a}\in V(l;0)$ for all $i\in [n], a\in \ZZ$
 by induction on $|a|$. The case $a=0$ follows from
\begin{equation}\tag{$\diamondsuit$}\label{diamond}
  v^{(0,l)}_{0,0}+v^{(1,l)}_{0,0}+\ldots+v^{(n-1,l)}_{0,0}=0,
\end{equation}
 which is obvious once the \emph{horizontal} subalgebra of $\ul{\ddot{u}}^{(n)}_d$
 is identified with $\ssl_n[Z,Z^{-1}]$.

  To proceed further, we need the following technical result based on non-degeneracy of the
 matrices $(a_{i,j}d^{km_{i,j}})_{i\in [n]}^{j\in [n]}$ (for $n>2$) and
 $(2\delta_{i,j}-(d^k+d^{-k})\delta_{i,j+1})_{i\in [2]}^{j\in [2]}$
 for any $d\ne \sqrt{1}, k\ne 0$.

\begin{lem}\label{shift operators}
 For any fixed $i\in [n], k\ne 0$, there exists an element
   $\bar{h}'_{i,k}\in \spa_\CC\langle \bar{h}_{0,k},\ldots,\bar{h}_{n-1,k} \rangle$
 such that $[\bar{h}'_{i,k}, \bar{e}_{j,l}]=\delta_{i,j}\bar{e}_{j,l+k}$
 for all $j\in [n],l\in \ZZ$.
\end{lem}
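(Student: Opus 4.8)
The plan is to produce $\bar h'_{i,k}$ inside the span of the $\bar h_{m,k}$ and reduce the whole statement to one linear-algebra question. Concretely, I would write $\bar h'_{i,k}=\sum_{m\in[n]}c_m\,\bar h_{m,k}$ with unknown scalars $c_m\in\CC$ and bracket it against an arbitrary $\bar e_{j,l}$. By relation (u5), $[\bar h_{m,k},\bar e_{j,l}]=a_{m,j}d^{-km_{m,j}}\bar e_{j,l+k}$, so
\[[\bar h'_{i,k},\bar e_{j,l}]=\Big(\sum_{m\in[n]}c_m\,a_{m,j}d^{-km_{m,j}}\Big)\bar e_{j,l+k}.\]
Hence the required identity $[\bar h'_{i,k},\bar e_{j,l}]=\delta_{i,j}\bar e_{j,l+k}$ holds for all $j\in[n],\ l\in\ZZ$ exactly when the row vector $(c_m)_m$ solves $\sum_m c_m N_{m,j}=\delta_{i,j}$, where $N:=(a_{m,j}d^{-km_{m,j}})_{m,j\in[n]}$. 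Such a solution exists for every $i$ if and only if $N$ is invertible. Since $a$ is symmetric and $m_{m,j}=-m_{j,m}$, a one-line check gives $N=P^{\top}$ with $P:=(a_{i,j}d^{km_{i,j}})_{i,j\in[n]}$, so it suffices to prove $\det P\ne 0$ --- precisely the non-degeneracy asserted in the statement.

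For $n>2$ I would recognise $P$ as the cyclic tridiagonal matrix $P=2\,\mathrm{Id}-tS-t^{-1}S^{-1}$, where $t:=d^k$ and $S$ is the cyclic shift $S_{i,j}=\delta_{i,j+1}$ (indices mod $n$, so $S^{-1}_{i,j}=\delta_{i,j-1}$); the diagonal, sub- and superdiagonal entries $2,\,-d^{k},\,-d^{-k}$ are read off from $a_{i,i}=2,\ a_{i,i\mp1}=-1,\ m_{i,i-1}=1,\ m_{i,i+1}=-1$. As $S$ is diagonalised by the discrete Fourier transform with eigenvalues the $n$-th roots of unity, the eigenvalues of $P$ are $2-t\lambda-t^{-1}\lambda^{-1}=-(t\lambda-1)^2/(t\lambda)$ as $\lambda$ ranges over $\{\lambda:\lambda^n=1\}$. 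Each factor is non-zero iff $t\lambda\ne 1$, i.e. iff $t^{n}=d^{nk}\ne 1$, which holds because $d\ne\sqrt1$ and $k\ne 0$; equivalently $\det P=-(d^{nk}-1)^2/d^{nk}$ is a non-zero multiple of $(t^n-1)^2$. Hence $P$ is invertible.

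For $n=2$ the same bracket computation produces the explicit matrix $\bigl(\begin{smallmatrix}2&-(d^k+d^{-k})\\-(d^k+d^{-k})&2\end{smallmatrix}\bigr)$, whose determinant is $4-(d^k+d^{-k})^2=-(d^k-d^{-k})^2$; this vanishes only when $d^{2k}=1$, again excluded. In either case, once $P$ (equivalently $N$) is invertible I solve $(c_m)_m=P^{-1}e_i$, set $\bar h'_{i,k}:=\sum_m c_m\bar h_{m,k}$, and the displayed computation yields the desired relation verbatim.

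The only genuine difficulty is the invertibility of the ``$d$-shifted'' affine Cartan matrix $P$; everything else is a formal manipulation of (u5). I expect the circulant diagonalisation to be the cleanest route for $n>2$, isolating the vanishing locus $d^{nk}=1$ transparently, with the explicit $2\times2$ determinant handling $n=2$. The degenerate case $n=1$ is not needed here (it is covered by~\cite{T1}), but it follows from the same scheme, since $[\bar h_{0,k},\bar e_{0,l}]=(2-d^k-d^{-k})\bar e_{0,l+k}$ with $2-d^k-d^{-k}\ne 0$ under $d\ne\sqrt1,\ k\ne 0$.
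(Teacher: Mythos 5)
Your proposal is correct and follows exactly the route the paper indicates: the paper derives Lemma~\ref{shift operators} from the non-degeneracy of the matrices $(a_{i,j}d^{km_{i,j}})_{i,j\in[n]}$ (for $n>2$) and $(2\delta_{i,j}-(d^k+d^{-k})\delta_{i,j+1})_{i,j\in[2]}$ (for $n=2$) for $d\ne\sqrt{1}$, $k\ne 0$, which is precisely the linear-algebra reduction you perform via relation (u5). Your circulant-eigenvalue computation giving $\det P=-(d^{nk}-1)^2/d^{nk}$, together with the explicit $2\times 2$ determinant $-(d^k-d^{-k})^2$, supplies the non-degeneracy verification that the paper leaves implicit.
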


  First, we prove $v^{(i,l)}_{-1,1}\in V(l;0)$.
 Applying $\ad(\bar{h}'_{i,-1})\ad(\bar{h}'_{0,1})$
 to the equality~(\ref{diamond}), we get a sum of
 $l^2n$ length $ln$ commutators being zero.
  Among those, $l^2n-l+\delta_{i,0}$ belong to $V(l;0)$
 as they start either from $\bar{e}_{i',0}\ (i'\in [n])$ or $\bar{e}_{0,1}$.
 The remaining $l-\delta_{i,0}$ commutators start from $\bar{e}_{i,-1}$
 and therefore are multiples of $v^{(i,l)}_{-1,1}$.
 It remains to show that the sum of these $l-\delta_{i,0}$ terms is nonzero.
 For the latter, it suffices to verify that the image of this sum
 under $\ul{\theta}^{(n)}_d$ is nonzero, which is a straightforward computation
 based on the assumption $d\ne \sqrt{1}$.
 To prove $v^{(i,l)}_{1,-1}\in V(l;0)$, we apply
 $\ad(\bar{h}'_{i,1})\ad(\bar{h}'_{i+1,-1})$ to~(\ref{diamond}) and follow the same arguments.

  To perform the inductive step, we assume that
 $v^{(i,l)}_{a,-a}\in V(l;0)$ for all $i\in [n], |a|\leq N$
 and we shall prove $v^{(i,l)}_{\pm (N+1),\mp (N+1)}\in V(l;0)$.
 Applying $\ad(\bar{h}'_{i,\pm(N+1)})\ad(\bar{h}'_{i+2,\mp 1})\ad(\bar{h}'_{i+1,\mp N})$ to~(\ref{diamond}),
 we get a sum of $l^3n$ length $ln$ commutators being zero.
 By the induction hypothesis, all of them,
 except for those starting from $\bar{e}_{i,\pm (N+1)}$, belong to $V(l;0)$.
  The remaining $l(l-\delta_{n,2})$ terms are multiples of $v^{(i,l)}_{\pm (N+1),\mp (N+1)}$.
 For $(n,l)\ne (2,1)$, it is easy to see that the sum of their images
 under $\ul{\theta}^{(n)}_d$ is nonzero,
 implying $v^{(i,l)}_{\pm (N+1),\mp (N+1)}\in V(l;0)$.
 In the remaining case $(n,l)=(2,1)$, the inclusion
 $v^{(i,l)}_{\pm (N+1),\mp (N+1)}\in V(l;0)$ follows from the relation (u2).

 This completes our induction step. Hence,
  $(\ul{\ddot{u}}^{(n),\geq}_d)_{(l\delta;0)}=V(l;0)\Rightarrow \dim(\ul{\ddot{u}}^{(n),\geq}_d)_{(l\delta;0)}\leq n$.

\medskip
\noindent
 $\bullet$ \emph{Case $0<k<l$.}

 Define $v_1:=v^{(1,l)}_{0,k},\ldots,v_{n-1}:=v^{(n-1,l)}_{0,k}, v_n:=v^{(0,l)}_{0,k}$
 and set $V(l;k):=\spa_{\CC}\langle v_1,\ldots,v_n \rangle$.
 We claim that $v^{(i,l)}_{a,k-a}\in V(l;k)$ for any $i\in [n], a\in \ZZ$.
 We will prove this in three steps.

\noindent
 \underline{Step 1}: Proof of $v^{(i,l)}_{k,0}\in V(l;k)$ for any $i\in [n]$.

  Applying $\ad(\bar{h}'_{i,k})$ to the equality~(\ref{diamond}),
 we immediately get $v^{(i,l)}_{k,0}\in V(l;k)$.

\noindent
 \underline{Step 2}: Proof of $v^{(i,l)}_{a,k-a}\in V(l;k)$ for any $i\in [n], 0<a<k$.

  It is known that any degree $k$ symmetric polynomial in $\{x_j\}_{j=1}^l$
 is a polynomial in $\{\sum_j x^r_j\}_{r=1}^k$.
 Choose $P_{k,l}$ such that
 $\Sym (x_1x_2\cdots x_k)=P_{k,l}(\sum_j x_j,\ldots, \sum_j x_j^k)$.
  Define $L_{i;k,l}\in \mathrm{End}(\ul{\ddot{u}}^{(n),\geq}_d)$ via
 $L_{i;k,l}=P_{k,l}(\ad(\bar{h}'_{i,1}),\ldots,\ad(\bar{h}'_{i,k}))$.
  Applying $L_{i;k,l}$ to the equality~(\ref{diamond}), we get a sum
 of ${{l}\choose{k}}\cdot n$ length $ln$ commutators being zero.
 Each of these terms starts either from $\bar{e}_{i',0}\ (i'\in [n])$ or
 $\bar{e}_{i,1}$. In the former case the commutator belongs to
 $V(l;k)$, while in the latter case the commutator is a multiple of
 $v^{(i,l)}_{1,k-1}$.
  There are ${l-1}\choose{k-1}$ terms starting
 from $\bar{e}_{i,1}$ and the sum of their images under $\ul{\theta}^{(n)}_d$ is nonzero.
 Therefore, $v^{(i,l)}_{1,k-1}\in V(l;k)$.

  Applying the same arguments to the symmetric function
 $\Sym (x_1^ax_2\cdots x_{k-a+1})$, we analogously get
 $v^{(i,l)}_{a,k-a}\in V(l;k)$ for any $i\in [n], 0<a<k$.

\noindent
 \underline{Step 3}: Proof of $v^{(i,l)}_{a,k-a}\in V(l;k)$ for any $i\in [n]$ and $a\notin\{0,1,\ldots,k\}$.

  We prove $v^{(i,l)}_{-N,k+N}, v^{(i,l)}_{k+N,-N}\in V(l;k)$ for all
 $i\in [n], N\in \ZZ_+$ by induction on $N$. The case $N=0$ is clear.
 Assume $v^{(i,l)}_{a,k-a}\in V(l;k)$ for any $i\in [n], -N\leq a\leq k+N$.
  Applying $\ad(\bar{h}'_{i,-N-1})\ad(\bar{h}'_{i+2,1})\ad(\bar{h}'_{i+1,k+N})$ to~(\ref{diamond}),
 we get a sum of $l^3n$ length $ln$ commutators being zero.
 Each of these terms either belongs to $V(l;k)$ by the induction
 hypothesis or is a multiple of $v^{(i,l)}_{-N-1,k+N+1}$.
 There are $l(l-\delta_{n,2})$ summands of the latter form and the sum of their
 images under $\ul{\theta}^{(n)}_d$ is nonzero.
 This implies $v^{(i,l)}_{-N-1,k+N+1}\in V(l;k)$.

 To prove $v^{(i,l)}_{k+N+1,-N-1}\in V(l;k)$, we apply
 $\ad(\bar{h}'_{i,k+N+1})\ad(\bar{h}'_{i+2,-1})\ad(\bar{h}'_{i+1,-N})$ to~(\ref{diamond})
 and follow the same arguments.

\medskip
\noindent
 $\bullet$ \emph{Case of an arbitrary $k$.}

 It is clear that $L_{i;l,l}$ induces an isomorphism
   $(\ul{\ddot{u}}^{(n),\geq}_d)_{(l\delta;k')}\iso (\ul{\ddot{u}}^{(n),\geq}_d)_{(l\delta;k'+l)}$
 for any $k'\in \ZZ$. In particular,
   $\dim (\ul{\ddot{u}}^{(n),\geq}_d)_{(l\delta;k)}=\dim (\ul{\ddot{u}}^{(n),\geq}_d)_{(l\delta;k\ \mathrm{mod}\ l)}\leq n$,
 due to the previous two cases.  $\blacksquare$


\subsection{Proof of Theorem~\ref{main 2}}
$\ $

  It is straightforward to see that the assignment from
 Theorem~\ref{main 2} preserves all the defining relations (y1--y6),
 hence, it provides a Lie algebra homomorphism
 $\vartheta^{(n)}_\beta: \ddot{y}^{(n)}_\beta\to \bar{\D}^{(n)}_{n\beta}$.
 We also consider the induced homomorphism
 $\ul{\vartheta}^{(n)}_\beta: \ul{\ddot{y}}^{(n)}_\beta\to \D^{(n)}_{n\beta}$,
 where $\ul{\ddot{y}}^{(n)}_\beta:=\ddot{y}^{(n)}_\beta/(\sum_i \bar{\xi}_{i,0})$
 is a central quotient of $\ddot{y}^{(n)}_\beta$.
 Clearly, it suffices to show that $\ul{\vartheta}^{(n)}_\beta$ is an isomorphism.

  The Lie algebra $\ul{\ddot{y}}^{(n)}_\beta$ is $Q$--graded via
 $\deg_1(\bar{x}^\pm_{i,r})=\pm \alpha_i,\ \deg_1(\bar{\xi}_{i,r})=0$
 and $\ZZ_+$--filtered as a quotient of the free Lie algebra
 on $\{\bar{x}^{\pm}_{i,r},\bar{\xi}_{i,r}\}_{i\in [n]}^{r\in \ZZ_+}$ graded via
 $\deg_2(\bar{x}^\pm_{i,r})=r,\ \deg_2(\bar{\xi}_{i,r})=r$.
  The Lie algebra $\D^{(n)}_{n\beta}$ is also $Q$--graded via
 $\deg_1(E_{i,j}\otimes \partial^rx^l)=l\delta+(\alpha_1+\ldots+\alpha_{j-1})-(\alpha_1+\ldots+\alpha_{i-1})$
 and $\ZZ_+$--filtered with the filtration $\leq k$ subspace
 consisting of the finite sums
 $\sum_{0\leq i\leq k}^{j\in \ZZ} A_{i,j}\partial^ix^j$,
 where $A_{i,j}\in \MM_n$ and $\tr(A_{k,j})=0$ for any $j\in \ZZ$.
 Let $(\ul{\ddot{y}}^{(n)}_\beta)_{(\alpha;\leq k)}$ and
 $(\D^{(n)}_{n\beta})_{(\alpha;\leq k)}$ denote the subspaces of
 $\ul{\ddot{y}}^{(n)}_\beta$ and $\D^{(n)}_{n\beta}$, respectively,
 consisting of the degree $\alpha$ and filtration $\leq k$ elements.


  Note that
   $\ul{\vartheta}^{(n)}_\beta((\ul{\ddot{y}}^{(n)}_\beta)_{(\alpha;\leq k)})\subset (\D^{(n)}_{n\beta})_{(\alpha;\leq k)}$
 for any  $\alpha\in Q, k\in \ZZ_+$. Hence, we get linear maps
  $\ul{\vartheta}^{(n)}_{\beta;\alpha,k}:
   (\ul{\ddot{y}}^{(n)}_\beta)_{(\alpha;\leq k)}/(\ul{\ddot{y}}^{(n)}_\beta)_{(\alpha;\leq k-1)}
   \to (\D^{(n)}_{n\beta})_{(\alpha;\leq k)}/(\D^{(n)}_{n\beta})_{(\alpha;\leq k-1)}$.
 We claim that all the maps $\ul{\vartheta}^{(n)}_{\beta;\alpha,k}$ are isomorphisms.
 To prove this, it suffices to show that $\ul{\vartheta}^{(n)}_{\beta;\alpha,k}$ is surjective and
\begin{equation}\tag{$\ddag$}
 \dim(\ul{\ddot{y}}^{(n)}_\beta)_{(\alpha;\leq k)}-\dim(\ul{\ddot{y}}^{(n)}_\beta)_{(\alpha;\leq k-1)}
 \leq \dim(\D^{(n)}_{n\beta})_{(\alpha;\leq k)}-\dim(\D^{(n)}_{n\beta})_{(\alpha;\leq k-1)}
\end{equation}
 for any $\alpha\in Q, k\in \ZZ_+$.
 The right-hand side of ($\ddag$) can be simplified as follows:
  $$\dim(\D^{(n)}_{n\beta})_{(\alpha;\leq k)}-\dim(\D^{(n)}_{n\beta})_{(\alpha;\leq k-1)}=
   \begin{cases}
      0  & \text{if}\ \ \alpha\ \mathrm{is\ nonzero\ and\ is\ not\ a\ root\ of}\ \widehat{\ssl}_n\\
      1  & \text{if}\ \ \alpha\ \mathrm{is\ a\ real\ root\ of}\ \widehat{\ssl}_n\\
      n-\delta_{k,0}  & \text{if}\ \ \alpha\ \mathrm{is\ an\ imaginary\ root\ or\ zero}
   \end{cases}.$$

  For $\alpha\notin \ZZ\delta$, the inequality~($\ddag$) and the surjectivity of
 $\ul{\vartheta}^{(n)}_{\beta;\alpha,k}$ can be deduced in the same way as~($\dag$).
 Hence, it remains to handle the case $\alpha=l\delta$.
 The $l=0$ case is obvious since the degree $0$ subspace of $\ul{\ddot{y}}^{(n)}_\beta$
 is spanned by $\bar{\xi}_{i,r}$.
 For the rest of the proof, we can assume $l\in \NN$.

  Let $\ul{\ddot{y}}^{(n),\geq}_\beta$ be the subalgebra of $\ul{\ddot{y}}^{(n)}_\beta$
 generated by $\{\bar{x}^+_{i,r}, \bar{\xi}_{i,r}\}_{i\in [n]}^{r\in \ZZ_+}$.
 It is isomorphic to an abstract Lie algebra generated by
 $\{\bar{x}^+_{i,r}, \bar{\xi}_{i,r}\}_{i\in [n]}^{r\in \ZZ_+}$
 subject to the defining relations (y1,y3,y4,y5,y6) and $\sum_i \bar{\xi}_{i,0}=0$.
 It suffices to show that
   $\dim(\ul{\ddot{y}}^{(n), \geq}_\beta)_{(l\delta;\leq k)}-\dim(\ul{\ddot{y}}^{(n), \geq}_\beta)_{(l\delta;\leq k-1)}\leq n-\delta_{k,0}$
 and $\ul{\vartheta}^{(n)}_{\beta;l\delta,k}$ is surjective for any $l\in \NN, k\in \ZZ_+$.

\medskip
\noindent
 \underline{Case $n=2$.}

  The degree $l\delta$ subspace of $\ul{\ddot{y}}^{(2),\geq}_\beta$ is spanned by
 all length $2l$ commutators $[\bar{x}^+_{i_1,k_1};\ldots;\bar{x}^+_{i_{2l},k_{2l}}]_{2l}$
 such that $\alpha_{i_1}+\ldots+\alpha_{i_{2l}}=l\delta$. Define
 $w^{(i,l)}_{a,b}:=[\bar{x}^+_{i,a}; \bar{x}^+_{i+1,0};\ldots; \bar{x}^+_{i,0};\bar{x}^+_{i+1,b}]_{2l}$
 for any $i\in [2]$ and $a,b\in \ZZ_+$.
  Due to our description of the degree $l\delta-\alpha_i$ subspace
 of $\ul{\ddot{y}}^{(2),\geq}_\beta$, we see that $(\ul{\ddot{y}}^{(2),\geq}_\beta)_{l\delta}$
 is spanned by $\{w^{(i,l)}_{a,b}\}_{i\in [2]}^{a,b\in \ZZ_+}$.
 Moreover, $(\ul{\ddot{y}}^{(2),\geq}_\beta)_{(l\delta,\leq k)}$ is
 spanned by $\{w^{(i,l)}_{a,b}\}_{i\in [2]}^{a,b\in \ZZ_+}$ with $a+b\leq k$.
 Therefore, the inequality
 $\dim(\ul{\ddot{y}}^{(2), \geq}_\beta)_{(l\delta;\leq k)}-\dim(\ul{\ddot{y}}^{(2), \geq}_\beta)_{(l\delta;\leq k-1)}\leq 2-\delta_{k,0}$
 and the surjectivity of $\ul{\vartheta}^{(2)}_{\beta;l\delta,k}$
 follow from our next result:

\begin{prop}\label{estimate}
 Define $W(l;N):=\spa_\CC\langle w^{(i,l)}_{0,M}\rangle_{i\in [2]}^{0\leq M\leq N}$
 for $l\in \NN,\ N\in \ZZ_+$.

\noindent
 (a) We have $w^{(i,l)}_{a,b}\in W(l;a+b)$ for any $i\in [2], l\in \NN, a,b\in \ZZ_+$.

\noindent
 (b) The images of $\{\ul{\vartheta}^{(2)}_\beta(w^{(i,l)}_{0,N})\}_{i\in [2]}$
 in the quotient space $(\D^{(2)}_{2\beta})_{(l\delta;\leq N)}/(\D^{(2)}_{2\beta})_{(l\delta;\leq N-1)}$
 are linearly independent for any $l,N\in \NN$.
\end{prop}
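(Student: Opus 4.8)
The plan is to establish (a) by an internal reduction inside $\ul{\ddot{y}}^{(2),\geq}_\beta$ using only the defining relations, and (b) by an explicit leading-symbol computation in $\D^{(2)}_{2\beta}$; together they pin down the two-dimensional graded quotient, (a) giving the spanning upper bound and (b) the matching lower bound that makes $\ul{\vartheta}^{(2)}_{\beta;l\delta,N}$ an isomorphism. For (a) I would induct on $N:=a+b$, the claim being that modulo $W(l;N-1)$ every $w^{(i,l)}_{a,b}$ with $a+b=N$ lies in $\spa_\CC\langle w^{(0,l)}_{0,N},w^{(1,l)}_{0,N}\rangle$. Two facts are recorded first: the first line of (y3.2) together with antisymmetry forces $[\bar{x}^+_{i,r},\bar{x}^+_{i,s}]=0$, so each $w^{(i,l)}_{a,b}$ is a genuine alternating-colour commutator; and by (y4.2)--(y5.2) the operator $\ad(\bar{\xi}_{i,1})$ acts \emph{exactly} (no $\beta$-correction) as the staggered shift $\bar{x}^+_{i,s}\mapsto 2\bar{x}^+_{i,s+1}$, $\bar{x}^+_{i+1,s}\mapsto -2\bar{x}^+_{i+1,s+1}$.

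The engine is the second line of (y3.2): modulo the filtration its $\beta^2$-term drops, leaving the discrete-harmonic identity $[\bar{x}^+_{i,r+2},\bar{x}^+_{i+1,s}]-2[\bar{x}^+_{i,r+1},\bar{x}^+_{i+1,s+1}]+[\bar{x}^+_{i,r},\bar{x}^+_{i+1,s+2}]\equiv 0$. Inserting this at the two outermost slots of the length-$2l$ commutator through the Jacobi identity yields, modulo $W(l;N-1)$, a recursion forcing the class of $w^{(i,l)}_{a,N-a}$ to be affine-linear in $a$. The interior index-shifts that this manipulation inevitably creates are tamed by the observation that all the intermediate tails have degree $l\delta-\alpha_i$, a \emph{real} root of $\widehat{\ssl}_2$, whose graded pieces are at most one-dimensional by the estimate already proved for $\alpha\notin\ZZ\delta$; hence each tail is, modulo lower filtration, a scalar multiple of the standard one. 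Finally a colour-reversal congruence $w^{(0,l)}_{a,0}\equiv-w^{(1,l)}_{0,a}\pmod{W(l;a-1)}$ (exact for $l=1$ by antisymmetry, and in general again forced by the one-dimensionality of the real-root tails) places both endpoints $w^{(0,l)}_{0,N}$ and $w^{(0,l)}_{N,0}$ of the affine family inside $W(l;N)$, and affine interpolation then covers every split; the lower-filtration remainders are absorbed by the outer induction.

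For (b) I would compute $\ul{\vartheta}^{(2)}_\beta(w^{(i,l)}_{0,N})$ outright. These elements have degree $l\delta$, so their images are diagonal, with entries in $\D_{2\beta}$ of $\partial$-order $\leq N$ and $x$-degree $l$. The decisive structural point is that, because the filtration on $\D^{(2)}_{2\beta}$ forces the top coefficient $A_{N,\cdot}$ to be traceless, the quotient $(\D^{(2)}_{2\beta})_{(l\delta;\leq N)}/(\D^{(2)}_{2\beta})_{(l\delta;\leq N-1)}$ is two-dimensional, spanned by the class $u_1$ of the order-$N$ traceless part $(E_{1,1}-E_{2,2})\otimes\partial^N x^l$ and the class $u_2$ of the order-$(N-1)$ trace part $(E_{1,1}+E_{2,2})\otimes\partial^{N-1}x^l$ (the trace of a commutator drops exactly one order in $\partial$, since its top symbols cancel). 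A short induction on $l$ using $E_{1,2}E_{2,1}=E_{1,1}$ and $E_{2,1}E_{1,2}=E_{2,2}$ shows that the matrix part of each nested commutator collapses to $\pm 2^{l-1}(E_{1,1}-E_{2,2})$, so the $u_1$-coordinates are $\mp 2^{l-1}\neq 0$. The separation comes from $u_2$: the image of $w^{(1,l)}_{0,N}$ is \emph{traceless}, because its outermost factor is $\bar{x}^+_{1,0}\mapsto E_{1,2}\otimes 1$ and the last bracket is therefore taken against the central function $1$, leaving $(E_{1,1}-E_{2,2})\otimes(\text{a single element of }\D_{2\beta})$; whereas the trace of $\ul{\vartheta}^{(2)}_\beta(w^{(0,l)}_{0,N})$ equals $[x^l,(\partial+\beta)^N]$, whose order-$(N-1)$ leading term is $-2lN\beta\,\partial^{N-1}x^l\neq 0$ (using $[\partial,x]=2\beta x$). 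Thus in the basis $(u_1,u_2)$ the two images are $(2^{l-1},0)$ and $(-2^{l-1},-lN\beta)$, whose determinant $-2^{l-1}lN\beta$ is nonzero for $l,N\in\NN$ and $\beta\neq 0$, proving linear independence.

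The hard part will be (a), specifically keeping the (y3.2)-manipulations inside $W(l;a+b)$: every use of the harmonic relation transports an index between two adjacent slots and so modifies the intervening tail commutator, and the argument closes only once one simultaneously controls (i) that the real-root tail spaces are one-dimensional in each filtration degree and (ii) that the $\beta^2$-corrections land in strictly lower filtration, where the outer induction removes them. Marshalling (i) and (ii) into a single clean induction is the delicate point; by contrast (b) is a finite, essentially mechanical computation once the two-dimensional quotient and the trace dichotomy between the two colours have been isolated.
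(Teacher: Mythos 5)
Your part (b) is correct and is essentially the paper's own argument: the paper likewise identifies the two-dimensional graded quotient (the traceless coefficient of $\partial^N$ plus the trace coefficient of $\partial^{N-1}$) and checks by direct computation that $\ul{\vartheta}^{(2)}_\beta(w^{(1,l)}_{0,N})$ hits the traceless top-order class while $\ul{\vartheta}^{(2)}_\beta(w^{(0,l)}_{0,N}+w^{(1,l)}_{0,N})$ hits the trace class one order below; your trace dichotomy is the same computation in a different basis.

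Part (a), however, has genuine gaps at precisely the two points your last paragraph flags. First, the harmonic relation (y3) is a relation between brackets of \emph{adjacent} generators, whereas the recursion you want (affine-linearity of $[w^{(i,l)}_{a,N-a}]$ in $a$, i.e.\ control of $w^{(i,l)}_{a+2,b}-2w^{(i,l)}_{a+1,b+1}+w^{(i,l)}_{a,b+2}$) couples the outermost index with the innermost one, sitting at opposite ends of a length-$2l$ commutator. Your proposed repair --- reduce the perturbed interior tails to standard ones via one-dimensionality of the real-root graded pieces --- cannot produce this recursion, because that reduction works only to leading order in the filtration, and to leading order the class of a tail depends only on the \emph{sum} of its indices (the leading matrix coefficients of all these nested commutators coincide). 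Concretely, applying (y3) at the innermost bracket and then reducing the resulting nonstandard tails gives $w^{(i,l)}_{a,b+2}\equiv 2\,w^{(i,l)}_{a,b+2}-w^{(i,l)}_{a,b+2}$ modulo lower filtration, a tautology; the actual content lies in the \emph{subleading} coefficients of those real-root expansions, which your scheme never computes. (Jacobi-reassociation at the outer end does not help either: it creates brackets $[\bar{x}^+_{i,a},\bar{x}^+_{i+1,0}]$ of imaginary degree $\delta$, exactly where the graded pieces are two-dimensional, so the one-dimensionality tool is unavailable there.) Second, the colour-reversal congruence $w^{(0,l)}_{a,0}\equiv -w^{(1,l)}_{0,a}\ (\mathrm{mod}\ W(l;a-1))$ is unsupported for $l\geq 2$: both sides have degree $l\delta$, so one-dimensionality of real-root spaces says nothing about them, and verifying the congruence by applying $\ul{\vartheta}^{(2)}_\beta$ would be circular, since that presupposes injectivity of the graded map in degree $l\delta$ --- the very statement the proposition exists to prove.

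For contrast, the paper's proof of (a) avoids both traps: it first proves \emph{membership} statements by applying the shift operators $H_3=\frac{-1}{6\beta^2}\sum_i\bar{\xi}_{i,3}$ and $H_4=\frac{-1}{12\beta^2}\sum_i\bar{\xi}_{i,4}+\frac{1}{12}\sum_i\bar{\xi}_{i,2}$ (uniform shifts by $1$ and $2$, which need $\beta\ne 0$ and the cubic and quartic Cartan generators --- note that your toolkit cannot reproduce them, since $\ad(\bar{\xi}_{0,1}+\bar{\xi}_{1,1})$ annihilates every generator, so degree-one Cartans give only the staggered shift) together with $\ad(\tfrac12\bar{\xi}_{i,1})$ to the exact identity $\sum_i w^{(i,l)}_{0,0}=0$; only \emph{after} a membership such as $w^{(i,l)}_{1,M}\in W(l;M+1)$ is established does it apply $\ul{\vartheta}^{(2)}_\beta$, combined with part (b), to determine the coefficients of an expansion already known to exist --- the legitimate, non-circular use of the image. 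Without a replacement for this mechanism, your part (a) does not close.
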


\begin{proof}[Proof of Proposition~\ref{estimate}]
$\ $

(a) Our proof is based on the following simple equalities:
\begin{equation}\label{auxiliary 1}
 \sum_{i\in [2]} w^{(i,l)}_{0,0}=0,
\end{equation}
\begin{equation}\label{auxiliary 2}
 [H_3,\bar{x}^+_{i,r}]=\bar{x}^+_{i,r+1},\ [H_4,\bar{x}^+_{i,r}]=\bar{x}^+_{i,r+2},
\end{equation}
 where
 $H_3:=\frac{-1}{6\beta^2}\sum_{i\in [2]} \bar{\xi}_{i,3},\
  H_4:=\frac{-1}{12\beta^2}\sum_{i\in [2]} \bar{\xi}_{i,4}+\frac{1}{12}\sum_{i\in [2]} \bar{\xi}_{i,2}$.

\noindent
 $\circ$ \emph{Proof of $w^{(i,l)}_{1,b}\in W(l;1+b)$.}

  We prove this by induction on $b$.
 Applying $\ad(H_3)$ or $\ad(\frac{1}{2}\bar{\xi}_{i,1})$ to~(\ref{auxiliary 1}), we get
 $w^{(0,l)}_{1,0}+w^{(1,l)}_{1,0}\in W(l;1)$ and
 $w^{(i,l)}_{1,0}-w^{(i+1,l)}_{1,0}\in W(l;1)$, respectively.
 Hence, $w^{(i,l)}_{1,0}\in W(l;1)$, which is the basis of induction.
  To perform the inductive step, we assume
 $w^{(i,l)}_{1,b}\in W(l;1+b)$ for any $0\leq b\leq M$.
 In particular, $w^{(i,l)}_{1,M}=\sum_{j\in [2]}^{N\leq M+1} c_{j,N} w^{(j,l)}_{0,N}$
 for some $c_{j,N}\in \CC$.
  Applying $\ul{\vartheta}^{(2)}_\beta$ to this equality, we find
 $c_{i,M+1}=\frac{M+1-l}{M+1}, c_{i+1,M+1}=\frac{-l}{M+1}$.
 Hence $w^{(i,l)}_{1,M}-\frac{M+1-l}{M+1}w^{(i,l)}_{0,M+1}+\frac{l}{M+1}w^{(i+1,l)}_{0,M+1}\in W(l;M)$.
 Applying $\ad(H_3+\frac{1}{2}\bar{\xi}_{i+1,1})$ to this inclusion, we get
 $l w^{(i,l)}_{1,M+1}+\frac{l}{M+1} w^{(i+1,l)}_{1,M+1}\in W(l;M+2)$.
  For $M>0$, this yields $w^{(i,l)}_{1,M+1}\in W(l;M+2)$ as
 $w^{(i,l)}_{1,M+1}\in \spa_\CC\langle l w^{(j,l)}_{1,M+1}+\frac{l}{M+1} w^{(j+1,l)}_{1,M+1} \rangle_{j\in [2]}$.

  It remains to treat separately the case $M=0$. We can assume $l>1$ as the case $l=1$ is simple.
 Applying $\ad(H_3+\frac{1}{2}\bar{\xi}_{i,1})$ to the inclusion
 $w^{(i,l)}_{1,0}+(l-1)w^{(i,l)}_{0,1}+lw^{(i+1,l)}_{0,1}\in W(l;0)$,
 we get $2(l-1)w^{(i,l)}_{1,1}+w^{(i,l)}_{2,0}\in W(l;2)$.
  On the other hand, applying $\ad(H_4+\frac{1}{2}\bar{\xi}_{i,2})$ to~(\ref{auxiliary 1}),
 we find $w^{(i,l)}_{2,0}\in W(l;2)$. This implies $w^{(i,l)}_{1,1}\in W(l;2)$.

\noindent
 $\circ$ \emph{Proof of $w^{(i,l)}_{a,b}\in W(l;a+b)$ for $a>1$.}

  We prove this by induction on $a$. The base cases of induction $a=0,1$ have been already treated.
 To perform the inductive step, we assume $w^{(i,l)}_{a,b}\in W(l;a+b)$ for all $0\leq a\leq M$ and $b\in \ZZ_+$.
 In particular, $w^{(i,l)}_{M,b}=\sum_{j\in [2]}^{N\leq M+b} d_{j,N} w^{(j,l)}_{0,N}$ for some $d_{j,N}\in \CC$.
 Applying $\ad(H_3)$ to this equality and using the induction hypothesis,
 we immediately get $w^{(i,l)}_{M+1,b}\in W(l;M+b+1)$.

 (b) Straightforward computations yield
 $\ul{\vartheta}^{(2)}_\beta(w^{(1,l)}_{0,N})=2^{N-1}(E_{1,1}-E_{2,2})\otimes \partial^Nx^l + \mathrm{l.o.t}$,
 $\ul{\vartheta}^{(2)}_\beta(w^{(0,l)}_{0,N}+w^{(1,l)}_{0,N})=-2^{l-1}N\beta\cdot (E_{1,1}+E_{2,2})\otimes \partial^{N-1}x^l + \mathrm{l.o.t.},$
 where $\mathrm{l.o.t.}$ denote summands with lower power of $\partial$.
 The result follows.
\end{proof}

  This completes our proof of Theorem~\ref{main 2} for $n=2$.

\medskip
\noindent
 \underline{Case $n>2$.}

  The proof for $n>2$ is completely analogous and crucially uses the same
 equalities~(\ref{auxiliary 1}) and (\ref{auxiliary 2}); we leave details to the interested reader. $\blacksquare$


\medskip

\section{Consequences}


\subsection{Classical limits of the vertical and horizontal quantum affine $\gl_n$}
$\ $

  For $n\geq 2$, the algebra $\U^{(n)}_{q,d}$ contains two subalgebras
 $\dot{U}^{v}_q,\ \dot{U}^{h}_q$ isomorphic to the quantum affine $U_q(\widehat{\ssl}_n)$.
 Here $\dot{U}^{v}_q$ is generated by
 $\{e_{i,k},f_{i,k},h_{i,k},c\}_{i\in [n]^\times}^{k\in \ZZ}$,
 while $\dot{U}^{h}_q$ is generated by
 $\{e_{i,0},f_{i,0},h_{i,0}\}_{i\in [n]}$.
 The following result is obvious.

\begin{lem}\label{consequence 1}
  For $d\ne \sqrt{1}$, the isomorphism $\theta^{(n)}_d$ identifies
 the $q\to 1$ limits of the subalgebras $\dot{U}^{v}_q$ and $\dot{U}^{h}_q$
 with the universal enveloping algebras of
 $\ssl_n[D,D^{-1}]\oplus \CC\cdot c^{(2)}_{\dd}$ and
 $\ssl_n[Z,Z^{-1}]\oplus \CC\cdot c^{(1)}_{\dd}$, respectively.
\end{lem}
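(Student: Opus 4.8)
The plan is to trace the explicit generators of $\dot{U}^v_q$ and $\dot{U}^h_q$ through the isomorphism $\theta^{(n)}_d$ established in Theorem~\ref{main 1}, and to identify the images of their $q\to 1$ limits. Since the $q\to 1$ limit of $\U^{(n)}_{q,d}$ is $U(\ddot{u}^{(n)}_d)$, the limits of the subalgebras $\dot{U}^v_q,\dot{U}^h_q$ are the universal enveloping algebras of the Lie subalgebras of $\ddot{u}^{(n)}_d$ generated by the images of their defining generators. Concretely, the $q\to 1$ limit of $\dot{U}^v_q$ is $U(\mathfrak{v})$, where $\mathfrak{v}\subset\ddot{u}^{(n)}_d$ is generated by $\{\bar{e}_{i,k},\bar{f}_{i,k},\bar{h}_{i,k}\}_{i\in[n]^\times}^{k\in\ZZ}$ together with $\bar{c}$, and the limit of $\dot{U}^h_q$ is $U(\mathfrak{h})$, where $\mathfrak{h}$ is generated by $\{\bar{e}_{i,0},\bar{f}_{i,0},\bar{h}_{i,0}\}_{i\in[n]}$.

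First I would apply $\theta^{(n)}_d$ to the generators of $\mathfrak{v}$. For $i\in[n]^\times$ the assignment sends $\bar{e}_{i,k},\bar{f}_{i,k},\bar{h}_{i,k}$ to $d^{(n-i)k}$ times $E_{i,i+1}\otimes D^k$, $E_{i+1,i}\otimes D^k$, and $(E_{i,i}-E_{i+1,i+1})\otimes D^k$, respectively. Rescaling $D$ absorbs the scalar $d^{(n-i)k}$, and the matrices $E_{i,i+1},E_{i+1,i},E_{i,i}-E_{i+1,i+1}$ for $i=1,\ldots,n-1$ are exactly the Chevalley generators of $\ssl_n$. Since these range only over the matrix block with $Z$-degree $0$, their Lie span is $\ssl_n\otimes\CC[D,D^{-1}]=\ssl_n[D,D^{-1}]$, and $\bar{c}\mapsto c^{(2)}_\dd$ contributes the central summand $\CC\cdot c^{(2)}_\dd$. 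I must check that the cocycle $\phi^{(2)}$ restricted to $\ssl_n[D,D^{-1}]$ (i.e.\ to elements with $l=0$) vanishes, so that no further central terms appear; indeed $\phi^{(2)}$ is supported on $l_1=-l_2$ with the trace of a product, and on this subalgebra either $l_1\ne0$ forces $D^{k_1}Z^0\cdot$ outside the support or the trace of the bracket of traceless matrices times the relevant factor vanishes, giving $\theta^{(n)}_d(\mathfrak{v})=\ssl_n[D,D^{-1}]\oplus\CC\cdot c^{(2)}_\dd$.

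Next I would treat $\mathfrak{h}$. The horizontal generators carry $k=0$, so for $i\in[n]^\times$ they map to the $\ssl_n$-Chevalley generators tensored with $D^0=1$, while the affine node generators $\bar{e}_{0,0},\bar{f}_{0,0},\bar{h}_{0,0}$ map to $E_{n,1}\otimes Z$, $E_{1,n}\otimes Z^{-1}$, and $E_{n,n}-d^0E_{1,1}+c^{(1)}_\dd=E_{n,n}-E_{1,1}+c^{(1)}_\dd$. These are precisely the loop-realization generators of $\widehat{\ssl}_n\cong\ssl_n[Z,Z^{-1}]\oplus\CC\cdot c^{(1)}_\dd$, with $Z$ playing the role of the loop variable and $c^{(1)}_\dd$ the central element; this matches the identification of the horizontal subalgebra with $\ssl_n[Z,Z^{-1}]$ already invoked in the proof of Theorem~\ref{main 1} (see the remark following~(\ref{diamond})). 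The span of these generators under the Lie bracket is $\ssl_n[Z,Z^{-1}]\oplus\CC\cdot c^{(1)}_\dd$, with $\phi^{(1)}$ supplying exactly the standard affine cocycle.

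The main obstacle—though a mild one, which is why the statement is called \emph{obvious}—is verifying that no \emph{spurious} central contributions from the other cocycle arise in either case: that $\phi^{(2)}$ vanishes on $\mathfrak{h}$ and $\phi^{(1)}$ vanishes on $\mathfrak{v}$, and that the two central directions $c^{(1)}_\dd,c^{(2)}_\dd$ attach to the correct subalgebra. This follows from the support conditions $\delta_{k_1,-k_2}\delta_{l_1,-l_2}$ in the cocycle formulas together with $\tr(E_{i,i+1}E_{j,j+1})=0$ for the off-diagonal generators, so I would record this as a short direct check rather than a separate lemma. Once these vanishings are confirmed, the two identifications follow immediately, completing the proof. $\blacksquare$
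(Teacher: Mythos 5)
Your overall strategy --- trace the generators of $\dot{U}^{v}_q$ and $\dot{U}^{h}_q$ through $\theta^{(n)}_d$ and identify the Lie subalgebras generated by their limits --- is exactly what the paper intends (it offers no proof, declaring the lemma obvious), and your horizontal case is correct: the images $E_{i,i+1}\otimes 1$, $E_{i+1,i}\otimes 1$, $E_{n,1}\otimes Z$, $E_{1,n}\otimes Z^{-1}$, $(E_{n,n}-E_{1,1})\otimes 1 + c^{(1)}_\dd$ are the loop-realization Chevalley generators of $\widehat{\ssl}_n$, with $\phi^{(1)}$ restricting to the standard affine cocycle.

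However, your cocycle bookkeeping for the vertical copy is wrong, and the specific check you propose in the second paragraph would fail if carried out. You assert that ``$\phi^{(2)}$ restricted to $\ssl_n[D,D^{-1}]$ (i.e.\ to elements with $l=0$) vanishes.'' It does not: for $l_1=l_2=0$ one has $\phi^{(2)}(M_1\otimes D^{k}, M_2\otimes D^{-k}) = k\,\tr(M_1M_2)$, e.g.\ $\phi^{(2)}(E_{1,2}\otimes D, E_{2,1}\otimes D^{-1})=1$. And it must not vanish: $\phi^{(2)}$ restricted to the vertical copy is precisely the standard affine cocycle, which is what makes $\ssl_n[D,D^{-1}]\oplus\CC\cdot c^{(2)}_\dd$ a genuine (non-split) copy of $\widehat{\ssl}_n$, consistent with the central terms $k\delta_{k,-l}\bar{c}$ in (u1) and (u4). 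What must vanish on the vertical copy is $\phi^{(1)}$, and it does for the trivial reason that its formula carries the prefactor $l_1$, which is $0$ there; symmetrically, $\phi^{(2)}$ vanishes on the horizontal copy because of the prefactor $k_1=0$. Your closing paragraph states these two vanishings correctly (contradicting your second paragraph), but the mechanism cited there --- the support deltas together with $\tr(E_{i,i+1}E_{j,j+1})=0$ --- is not the right one: traces such as $\tr(E_{i,i+1}E_{i+1,i})=1$ do not vanish, and brackets of vertical generators genuinely do produce $c^{(2)}_\dd$-terms. None of this damages the final answer, since $c^{(2)}_\dd$ is the image of $\bar{c}$ and lies in the claimed subalgebra anyway, but as written your proof asserts a false vanishing and justifies the true ones by an incorrect argument; both are repaired by the one-line prefactor observation. (A minor further point: ``rescaling $D$'' cannot absorb the scalars $d^{(n-i)k}$, whose exponent depends on $i$; the correct remark is simply that each generator spans the same line as the corresponding $E_{i,i+1}\otimes D^k$, $E_{i+1,i}\otimes D^k$, $(E_{i,i}-E_{i+1,i+1})\otimes D^k$, so the generated subalgebra is unchanged.)
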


  According to~\cite{FJMM}, the algebra $\U^{(n)}_{q,d}$ also contains two
 Heisenberg subalgebras $\h^{v}$ and $\h^{h}$, which commute with
 $\dot{U}^{v}_q$ and $\dot{U}^{h}_q$, respectively.
 This yields two copies of the quantum affine
 $U_q(\widehat{\gl}_n)$ inside $\U^{(n)}_{q,d}$,
 which will be denoted by $\dot{U}^{v,'}_q$ and $\dot{U}^{h,'}_q$, respectively.

\begin{lem}\label{consequence 2}
  For $d\ne \sqrt{1}$, the isomorphism $\theta^{(n)}_d$ identifies the $q\to 1$ limits
 of the subalgebras $\dot{U}^{v,'}_q$ and $\dot{U}^{h,'}_q$ with the universal enveloping
 algebras of $\gl_n[D,D^{-1}]^0\oplus \CC\cdot c^{(2)}_{\dd}$ and
 $\gl_n[Z,Z^{-1}]^0\oplus \CC\cdot c^{(1)}_{\dd}$, where
 $\gl[D,D^{-1}]^0:=\ssl_n\otimes 1\oplus \underset{k\ne 0}\bigoplus \gl_n\otimes D^k$ and
 $\gl[Z,Z^{-1}]^0:=\ssl_n\otimes 1\oplus \underset{k\ne 0}\bigoplus \gl_n\otimes Z^k$.
\end{lem}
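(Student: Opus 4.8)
The plan is to reduce Lemma~\ref{consequence 2} to Lemma~\ref{consequence 1} by analyzing only the extra Heisenberg generators. Recall from~\cite{FJMM} that $\dot{U}^{v,'}_q$ (resp.\ $\dot{U}^{h,'}_q$) is generated by $\dot{U}^v_q$ (resp.\ $\dot{U}^h_q$) together with the Heisenberg subalgebra $\h^v$ (resp.\ $\h^h$), the two factors commuting. Passing to the $q\to 1$ limit, the classical limit of $\dot{U}^{v,'}_q$ is the universal enveloping algebra of the Lie subalgebra $\mathfrak{g}^v\subset\ddot{u}^{(n)}_d$ generated by the classical limits of $\dot{U}^v_q$ and of $\h^v$. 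By Lemma~\ref{consequence 1}, $\theta^{(n)}_d$ identifies the former with $\ssl_n[D,D^{-1}]\oplus\CC\cdot c^{(2)}_{\dd}$. Hence it suffices to compute $\theta^{(n)}_d$ on the classical limit $\bar{\h}^v$ of $\h^v$ and to check that the two pieces together generate exactly $\gl_n[D,D^{-1}]^0\oplus\CC\cdot c^{(2)}_{\dd}$.

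First I would identify $\bar{\h}^v$. In each nonzero degree $k$ the Cartan currents $\{\bar{h}_{i,k}\}_{i\in[n]}$ span an $n$-dimensional space, of which the vertical $\ssl_n$ accounts for the $(n-1)$-dimensional traceless part, the remaining one-dimensional ``trace'' direction being precisely $\bar{\h}^v$. A direct computation with the formulas of Theorem~\ref{main 1} shows that $\theta^{(n)}_d$ sends $\spa_\CC\langle\bar{h}_{i,k}\rangle_{i\in[n]}$ onto all diagonal matrices times $D^k$: the images $d^{(n-i)k}(E_{i,i}-E_{i+1,i+1})\otimes D^k$ $(i\in[n]^\times)$ span the traceless diagonal, while $\theta^{(n)}_d(\bar{h}_{0,k})=(E_{n,n}-d^{nk}E_{1,1})\otimes D^k$ has trace $1-d^{nk}\ne 0$ for $d\ne\sqrt{1},\ k\ne 0$, supplying the complementary direction $\CC\cdot I\otimes D^k$. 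Thus $\theta^{(n)}_d(\bar{\h}^v)=\bigoplus_{k\ne 0}\CC\cdot I\otimes D^k\oplus\CC\cdot c^{(2)}_{\dd}$; the relation $[I\otimes D^k, I\otimes D^{-k}]=\phi^{(2)}(I\otimes D^k, I\otimes D^{-k})\,c^{(2)}_{\dd}=kn\cdot c^{(2)}_{\dd}$ confirms the Heisenberg structure with central element $c^{(2)}_{\dd}$, and each $I\otimes D^k$ commutes with $\ssl_n[D,D^{-1}]$, consistently with $[\h^v,\dot{U}^v_q]=0$.

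Combining the two computations, $\theta^{(n)}_d(\mathfrak{g}^v)=\bigl(\ssl_n\otimes 1\oplus\bigoplus_{k\ne 0}\gl_n\otimes D^k\bigr)\oplus\CC\cdot c^{(2)}_{\dd}=\gl_n[D,D^{-1}]^0\oplus\CC\cdot c^{(2)}_{\dd}$, since adjoining $\CC\cdot I\otimes D^k$ to $\ssl_n\otimes D^k$ yields all of $\gl_n\otimes D^k$ for $k\ne 0$, while degree $0$ retains only $\ssl_n\otimes 1$; moreover the bracket closes (scalar currents commute with $\ssl_n[D,D^{-1}]$ and pair into the center), so nothing larger is generated. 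The horizontal case is entirely parallel: one repeats the argument with $\h^h$ and $Z$ in place of $D$, the cocycle $\phi^{(1)}$ now furnishing the Heisenberg relation, or else transports the vertical statement through the Miki automorphism, whose classical limit interchanges the $D$- and $Z$-currents.

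The main obstacle is the identification of the classical limit of the horizontal Heisenberg $\h^h$: its generators in~\cite{FJMM} are built from the horizontal $\ssl_n$ rather than from the raw Cartan currents $\bar{h}_{i,k}$, so the transparent degree-by-degree trace computation available in the vertical case does not apply verbatim. Here I would rely on the Miki automorphism to reduce to the vertical computation, or else evaluate $\theta^{(n)}_d$ on the relevant combinations directly and verify that their images span $\bigoplus_{k\ne 0}\CC\cdot I\otimes Z^k$ and assemble into a Heisenberg algebra with central element $c^{(1)}_{\dd}$.
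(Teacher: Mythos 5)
Your proposal is correct and follows essentially the same route as the paper: reduce to Lemma~\ref{consequence 1}, show that the classical limit of the vertical Heisenberg $\h^v$ is carried by $\theta^{(n)}_d$ onto the scalar currents $\bigoplus_{k\ne 0}\CC\cdot I_n\otimes D^k$ (plus $\CC\cdot c^{(2)}_{\dd}$), note that these together with $\ssl_n[D,D^{-1}]\oplus\CC\cdot c^{(2)}_{\dd}$ generate exactly $\gl_n[D,D^{-1}]^0\oplus\CC\cdot c^{(2)}_{\dd}$, and then transport the horizontal case through the classical Miki automorphism of Lemma~\ref{consequence 3}, exactly as in part (ii) of the paper's proof. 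The only (cosmetic) difference is in pinning down $\theta^{(n)}_d(\bar{\h}^v)$: the paper takes FJMM's defining linear system for the coefficients $c_{i,k}$, passes to its $q\to 1$ limit, and solves it to get $H^v_k=\frac{1-d^{nk}}{n}\cdot I_n\otimes D^k$, whereas you reach the same conclusion by the trace/centralizer argument (a nonzero diagonal element of degree $k$ commuting with the vertical $\ssl_n$ must be scalar), which is the same idea in structural form.
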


\begin{proof}[Proof of Lemma~\ref{consequence 2}]
$\ $

  (i) First, we recall the construction of $\h^{v}$ from~\cite[Sect. 2.2]{FJMM}.
  For any $k\ne 0$ and $i,j\in [n]$, define the constants
   $b_n(i,j;k):=
    \begin{cases}
       d^{-km_{i,j}} \frac{q^{ka_{i,j}}-q^{-ka_{i,j}}}{k(q-q^{-1})} & \text{if}\ n>2 \\
       \delta_{i,j}\cdot \frac{q^{2k}-q^{-2k}}{k(q-q^{-1})}-\delta_{i,j+1}\cdot(d^k+d^{-k})\frac{q^k-q^{-k}}{k(q-q^{-1})} & \text{if}\ n=2
    \end{cases}$,
  so that their $q\to 1$ limits are equal to
   $\bar{b}_n(i,j;k)=
    \begin{cases}
       a_{i,j}d^{-km_{i,j}} & \text{if}\ n>2 \\
       2\delta_{i,j}-(d^k+d^{-k})\delta_{i,j+1} & \text{if}\ n=2
    \end{cases}$.
  For any fixed $k\ne 0$, let $\{c_{i,k}\}_{i\in [n]}$ be a unique solution of the system
  $\sum_{i\in [n]} b_n(i,j;k)c_{i,k}=0$ for all $j\in [n]^\times$ with $c_{0,k}=1$.
  By construction, the subalgebra $\h^{v}$ is generated by $q^{c/2}$
  and the elements $\{h^{v}_k:=\sum_{i\in [n]} c_{i,k}h_{i,k}\}_{k\ne 0}$.
  The image of the $q\to 1$ limit of $h^{v}_k$ under $\theta^{(n)}_d$ equals
    $$H^v_k=\left(\bar{c}_{0,k}(E_{n,n}-d^{nk}E_{1,1})+\sum_{i=1}^{n-1}\bar{c}_{i,k}d^{(n-i)k}(E_{i,i}-E_{i+1,i+1})\right)\otimes D^k,$$
  where the constants $\{\bar{c}_{i,k}\}$ satisfy
  $\sum_{i\in [n]} \bar{b}_n(i,j;k)\bar{c}_{i,k}=0\ \mathrm{for\ all}\ j\in [n]^\times$ and $\bar{c}_{0,k}=1$.
   Hence, $H^v_k=\frac{1-d^{nk}}{n}\cdot I_n\otimes D^k$ with $I_n=\sum_{j=1}^n E_{j,j}$.
  It remains to notice that the Lie subalgebra of $\bar{\dd}^{(n),0}_{d^n}$
  generated by $\ssl_n[D,D^{-1}]\oplus \CC\cdot c^{(2)}_\dd$ and $\{I_n\otimes D^k\}_{k\ne 0}$
  is exactly $\gl_n[D,D^{-1}]^0\oplus \CC\cdot c^{(2)}_{\dd}$.

   (ii) According to~\cite{FJMM}, $\dot{U}^{h,'}_q$ is a preimage of $\dot{U}^{v,'}_q$
  under the Miki's automorphism $\varpi$. Combining (i) with Lemma~\ref{consequence 3} below,
  we get the description of $\theta^{(n)}_d(q\to 1\ \mathrm{limit\ of}\ \dot{U}^{h,'}_q)$.
\end{proof}


\subsection{Classical limit of the Miki's automorphism}
$\ $

  The natural `90 degree rotation' automorphism of $\U^{(1)}_{q,d}$ (due to Burban--Schiffmann)
 admits a generalization to the case of $\U^{(n)}_{q,d}$ with $n\geq 2$ (due to Miki).
\begin{thm}\cite{M}
 For $n\geq 2$, there exists an automorphism $\varpi$ of $\U^{(n)}_{q,d}$ such that
  $$\varpi(\dot{U}^{v}_q)=\dot{U}^{h}_q,\ \varpi(\dot{U}^{h}_q)=\dot{U}^{v}_q,\
    \varpi(c)=-\sum_{i\in [n]} h_{i,0},\ \varpi(\sum_{i\in [n]} h_{i,0})=c.$$
\end{thm}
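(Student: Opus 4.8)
The plan is to construct $\varpi$ directly and then prove it is bijective, rather than to invoke its existence. The structural input is the result of~\cite{FJMM} that $\U^{(n)}_{q,d}$ is generated by its two quantum affine subalgebras $\dot{U}^{v}_q$ and $\dot{U}^{h}_q$, each abstractly isomorphic to $U_q(\widehat{\ssl}_n)$ but realized differently: $\dot{U}^{v}_q$ sits in Drinfeld's loop presentation, whereas $\dot{U}^{h}_q$ sits in the Kac--Moody (Chevalley--Serre) presentation. The $90$-degree rotation must interchange the two \emph{affine directions} of the algebra---the loop direction of $\dot{U}^{v}_q$ (with central charge $c$) and the imaginary-root direction of $\dot{U}^{h}_q$ (with level $\sum_{i\in[n]}h_{i,0}$)---which is precisely the image of $S=\left(\begin{smallmatrix} 0 & -1 \\ 1 & 0\end{smallmatrix}\right)\in \mathrm{SL}_2(\ZZ)$ acting on these charges, forcing $\varpi(c)=-\sum_{i}h_{i,0}$ and $\varpi(\sum_i h_{i,0})=c$.

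First I would prescribe $\varpi$ on a generating set small enough that relation-checking stays finite. Since $\dot{U}^{h}_q$ together with the vertical Heisenberg elements $\{h_{i,\pm1}\}_{i\in[n]}$ (equivalently $\h^{v}$) already generate $\U^{(n)}_{q,d}$, it suffices to prescribe $\varpi$ on $\{e_{i,0},f_{i,0},h_{i,0}\}_{i\in[n]}$ and on $\{h_{i,\pm1}\}_{i\in[n]}$. The images of the horizontal Chevalley generators are prescribed to be explicit elements of $\dot{U}^{v}_q$ realizing, via Drinfeld's isomorphism between the loop and Kac--Moody presentations of $U_q(\widehat{\ssl}_n)$, the exchange of the two affine directions; this enforces $\varpi(\dot{U}^{h}_q)=\dot{U}^{v}_q$. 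Dually, the vertical generators are sent to horizontal ones by the inverse rule, enforcing $\varpi(\dot{U}^{v}_q)=\dot{U}^{h}_q$, while the Heisenberg images are pinned down by requiring $\varpi(\h^{v})=\h^{h}$ using the structure constants $b_n(i,j;k)$ recalled in the proof of Lemma~\ref{consequence 2}.

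The main obstacle is to prove that this assignment respects \emph{all} defining relations of $\U^{(n)}_{q,d}$, so that it extends to an algebra endomorphism by the universal property of the presentation. The relations internal to each copy $\dot{U}^{v}_q,\dot{U}^{h}_q$ are automatic, since on each $\varpi$ restricts to an isomorphism of $U_q(\widehat{\ssl}_n)$ onto the other in its standard presentation, inherited from Drinfeld's isomorphism. What remains are the \emph{cross relations} coupling horizontal and vertical generators together with the toroidal Serre relations; these I would not check term by term. Instead I would use the $Q\times\ZZ$ grading of $\U^{(n)}_{q,d}$ to reduce to finitely many graded components, exploiting that the cross relations are generated in low degree by commutators of Chevalley generators with $h_{i,\pm1}$. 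A robust alternative, closer to Miki's original method, is to test the candidate identities after evaluation in a faithful family of Fock/vector representations on which the rotation is implemented by an explicit intertwining operator; here the obstacle shifts to establishing sufficient faithfulness of the family.

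Finally, to see that the resulting endomorphism $\varpi$ is an automorphism I would exhibit a two-sided inverse built from the inverse Drinfeld isomorphism, or argue conceptually that $\varpi$ has finite order: tracking the prescribed action on the central charges gives $c\mapsto-\sum_i h_{i,0}\mapsto -c\mapsto \sum_i h_{i,0}\mapsto c$, consistent with $\varpi^{4}=\mathrm{id}$. Either way the required compatibilities $\varpi(\dot{U}^{v}_q)=\dot{U}^{h}_q$ and $\varpi(\dot{U}^{h}_q)=\dot{U}^{v}_q$, together with the action on $c$ and $\sum_i h_{i,0}$, are built into the construction.
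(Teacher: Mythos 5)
The statement you were asked to prove is not proved in the paper at all: it is quoted from Miki's work \cite{M}, and the paper's ``proof'' is simply that citation. So the comparison here is between your sketch and Miki's original (highly nontrivial) construction, and judged on those terms your proposal has genuine gaps rather than being a complete alternative argument.

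The central gap is that you defer exactly the hard content of the theorem. Writing that the images of the horizontal Chevalley generators are ``explicit elements of $\dot{U}^{v}_q$ realizing, via Drinfeld's isomorphism, the exchange of the two affine directions'' is not a construction: Drinfeld's isomorphism relates two presentations of the single algebra $U_q(\widehat{\ssl}_n)$, and it does not extend canonically to $\U^{(n)}_{q,d}$ --- producing such an extension \emph{is} Miki's theorem. Your relation-checking step also cannot be run as described: $\U^{(n)}_{q,d}$ is presented by the Drinfeld-type generators $\{e_{i,k},f_{i,k},h_{i,k},c\}$ with the relations recalled in Section 1, not by ``two copies of $U_q(\widehat{\ssl}_n)$ plus cross relations,'' and no such finite list of cross relations is available a priori (any such presentation is itself a deep structural fact, essentially downstream of Miki's automorphism). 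Hence the universal property gives you nothing until you either specify images of \emph{all} generators $e_{i,k},f_{i,k},h_{i,k}$ and verify the full set of toroidal relations, or establish the alternative presentation --- neither of which the sketch does. The two fallback arguments have the same character: faithfulness of the family of Fock/vector representations is asserted, not proved; and the invertibility argument is circular --- observing that the action on $c$ and $\sum_i h_{i,0}$ is ``consistent with $\varpi^4=\mathrm{id}$'' does not show that the endomorphism (even granting its existence) has finite order, since that requires controlling $\varpi^4$ on all generators. In short, the proposal correctly identifies the shape of the statement ($\mathrm{SL}_2(\ZZ)$-type rotation exchanging the vertical and horizontal directions) but every step where the actual difficulty lives is left unexecuted.
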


  Our next result provides a description of the $q\to 1$ limit of $\varpi$,
 denoted by $\bar{\varpi}$, viewed as an automorphism of the
 universal enveloping algebra $U(\bar{\dd}^{(n),0}_{d^n})$.

\begin{lem}\label{consequence 3}
  $\bar{\varpi}$ is induced by an automorphism of the Lie algebra $\bar{\dd}^{(n)}_{d^n}$ defined via
\begin{equation}\tag{$\star$}\label{star}
 c^{(1)}_\dd \mapsto c^{(2)}_\dd,\ c^{(2)}_\dd \mapsto -c^{(1)}_\dd,\
 A\otimes D^kZ^l\mapsto d^{-nk}(-d)^{nl} A\otimes Z^{-k}D^l\ \ \forall\ A\in \MM_n,\ k,l\in \ZZ.
\end{equation}
\end{lem}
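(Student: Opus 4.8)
The plan is to verify directly that the map $(\star)$ defines a Lie algebra automorphism of $\bar{\dd}^{(n)}_{d^n}$, and then to check that it correctly reproduces the action of $\bar\varpi$ on a set of generators of $U(\bar{\dd}^{(n),0}_{d^n})$. First I would confirm that the assignment $A\otimes D^kZ^l\mapsto d^{-nk}(-d)^{nl}A\otimes Z^{-k}D^l$ respects the associative multiplication of $\dd^{(n)}_{d^n}$ up to the specified scalars, hence the commutator bracket. Since the underlying associative algebra is $\MM_n\otimes\dd_t$ with $t=d^n$, it suffices to track how the monomials $D^kZ^l$ transform: writing $Z^{-k}D^l$ in the standard normal-ordered form introduces a power of $t=d^n$ governed by the relation $DZ=t\,ZD$, and one checks that the prefactors $d^{-nk}(-d)^{nl}$ are exactly what is needed to make the map multiplicative. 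The matrix factor $A$ is untouched, so the $\MM_n$-part is automatic.

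Next I would verify that the two central cocycle terms transform correctly, i.e.\ that under the linear part of $(\star)$ the cocycle $\phi^{(1)}$ is sent to $\phi^{(2)}$ and $\phi^{(2)}$ to $-\phi^{(1)}$, matching $c^{(1)}_\dd\mapsto c^{(2)}_\dd$, $c^{(2)}_\dd\mapsto -c^{(1)}_\dd$. Concretely, on a pair $(M_1\otimes D^{k_1}Z^{l_1}, M_2\otimes D^{k_2}Z^{l_2})$ with $k_1=-k_2$, $l_1=-l_2$, the image pair has roles of $k$ and $l$ interchanged (with a sign from $(-d)^{nl}$), and the scalar prefactors multiply to $d^{-n k_1 - n k_2}(-d)^{n l_1 + n l_2}=1$ on the diagonal where the cocycles are supported; I would check that $\phi^{(1)}$ evaluated on the image (which reads off the new $D$-exponent, namely the old $l_1$) equals $\phi^{(2)}$ on the original (which reads off $k_1$), and track the sign in the reverse direction to get the $-c^{(1)}_\dd$. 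This is the computation that genuinely pins down the signs $(-d)^{nl}$ and the asymmetry between the two centers, so I expect it to be the most delicate bookkeeping step.

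Finally I would confirm that this Lie automorphism, once passed to $U(\bar{\dd}^{(n),0}_{d^n})$, agrees with $\bar\varpi$. Because $\bar\varpi$ is the $q\to1$ limit of Miki's $\varpi$, it is determined by its effect on the generators, and the defining properties $\varpi(c)=-\sum_i h_{i,0}$, $\varpi(\sum_i h_{i,0})=c$, $\varpi(\dot U^v_q)=\dot U^h_q$, $\varpi(\dot U^h_q)=\dot U^v_q$ translate under $\theta^{(n)}_d$ into statements about how $c^{(2)}_\dd$, the element $\sum_i\bar h_{i,0}$, and the vertical/horizontal copies $\ssl_n[D,D^{-1}]$, $\ssl_n[Z,Z^{-1}]$ (together with their Heisenberg enlargements from Lemma~\ref{consequence 2}) are interchanged. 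I would check that $(\star)$ sends $c^{(2)}_\dd\mapsto -c^{(1)}_\dd$ and that $c^{(1)}_\dd$ corresponds to the image of $\sum_i \bar h_{i,0}$, matching the central relations, and that the exchange $D^k\leftrightarrow Z^l$ swaps the images of $\dot U^v_q$ and $\dot U^h_q$ under $\theta^{(n)}_d$ as described in Lemmas~\ref{consequence 1}--\ref{consequence 2}. Since both $\bar\varpi$ and the map induced by $(\star)$ are algebra automorphisms agreeing on generators, they coincide. The main obstacle is the sign and power-of-$d$ accounting in the second step; the rest is a matter of matching generators against the explicit formulas for $\theta^{(n)}_d$.
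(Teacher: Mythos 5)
Your first two steps are sound and correspond to the paper's (one-line) observation that $(\star)$ defines a Lie algebra automorphism of $\bar{\dd}^{(n)}_{d^n}$: checking multiplicativity of the linear part and checking that it carries $\phi^{(1)}$ to $\phi^{(2)}$ and $\phi^{(2)}$ to $-\phi^{(1)}$ is exactly the content of that remark. The gap is in your final step. You propose to identify the restriction of $(\star)$ with $\bar{\varpi}$ by checking agreement "on generators", but the only information about $\varpi$ you invoke is the coarse data from Miki's theorem: $\varpi(\dot{U}^{v}_q)=\dot{U}^{h}_q$, $\varpi(\dot{U}^{h}_q)=\dot{U}^{v}_q$, $\varpi(c)=-\sum_i h_{i,0}$, $\varpi(\sum_i h_{i,0})=c$. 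This data does pin down the action on the two central elements (indeed $\theta^{(n)}_d(\bar{c})=c^{(2)}_\dd$ and $\theta^{(n)}_d(\sum_i \bar{h}_{i,0})=c^{(1)}_\dd$), but it does \emph{not} determine $\varpi$, hence does not determine $\bar{\varpi}$, on anything else: knowing that an automorphism interchanges two subalgebras as subalgebras says nothing about its values on individual elements of those subalgebras. Concretely, for $\lambda\in\CC^\times$ let $\rho_\lambda$ be the automorphism $A\otimes D^kZ^l\mapsto \lambda^l A\otimes D^kZ^l$, $c^{(j)}_\dd\mapsto c^{(j)}_\dd$ (it preserves both cocycles, since they are supported on pairs with $l_1=-l_2$). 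Then $\rho_\lambda\circ(\star)$ satisfies every property you check --- same action on $c^{(1)}_\dd, c^{(2)}_\dd$, same swapping of the vertical and horizontal copies of Lemmas~\ref{consequence 1}--\ref{consequence 2} --- yet differs from $(\star)$ for $\lambda\ne 1$. So your argument cannot single out $(\star)$; in particular the scalars $d^{-nk}(-d)^{nl}$, which are the whole point of the lemma, are not recoverable from the coarse properties.

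What is genuinely needed, and what the paper's proof supplies, is the \emph{explicit} action of $\varpi$ on a concrete generating set: the paper cites~\cite[Proposition 1.4]{T2}, which computes $\varpi$ on $\{e_{i,0},f_{i,0},h_{i,\pm 1}\}_{i\in[n]}$. Taking $q\to 1$ limits there gives the values of $\bar{\varpi}$ on
$E_{i,i+1}\otimes 1$, $E_{i+1,i}\otimes 1$, $E_{n,1}\otimes Z$, $E_{1,n}\otimes Z^{-1}$, $(E_{i,i}-E_{i+1,i+1})\otimes D^{\pm 1}$, $c^{(1)}_\dd$, $c^{(2)}_\dd$,
and these elements generate the Lie algebra $\bar{\dd}^{(n),0}_{d^n}$; comparing with $(\star)$ on exactly this list (which is where the factors $(-d)^{\pm n}$ and $d^{\mp n}$ appear) finishes the proof. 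Without this explicit input --- or an equivalent computation of $\bar{\varpi}$ on some generating set --- your concluding sentence "both are automorphisms agreeing on generators" is unsupported.
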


\begin{proof}[Proof of Lemma~\ref{consequence 3}]
$\ $

   It is easy to see that the formulas ($\star$) define a Lie algebra automorphism;
 we denote its restriction to $\bar{\dd}^{(n),0}_{d^n}$  by $\wt{\varpi}$.
  On the other hand, the action of $\varpi$ on the generators
 $\{e_{i,0},f_{i,0},h_{i,\pm 1}\}_{i\in [n]}$ was computed in~\cite[Proposition 1.4]{T2}.
 Taking the $q\to 1$ limit in these formulas, we get
  $$\bar{\varpi}: E_{i,i+1}\otimes 1\mapsto E_{i,i+1}\otimes 1,\ E_{i+1,i}\otimes 1\mapsto E_{i+1,i}\otimes 1,$$
  $$\bar{\varpi}: E_{n,1}\otimes Z\mapsto (-d)^nE_{n,1}\otimes D,\ E_{1,n}\otimes Z^{-1}\mapsto (-d)^{-n} E_{1,n}\otimes D^{-1},$$
  $$\bar{\varpi}: (E_{i,i}-E_{i+1,i+1})\otimes D^{\pm 1}\mapsto d^{\mp n} (E_{i,i}-E_{i+1,i+1})\otimes Z^{\mp 1}$$
 for all $1\leq i\leq n-1$. Therefore, images of the elements
   $$E_{i,i+1}\otimes 1,\ E_{i+1,i}\otimes 1,\ E_{n,1}\otimes Z,\ E_{1,n}\otimes Z^{-1},\
     (E_{i,i}-E_{i+1,i+1})\otimes D^{\pm 1},\ c^{(1)}_\dd,\ c^{(2)}_\dd$$
 under $\bar{\varpi}$ and $\wt{\varpi}$ coincide.
  This completes our proof, since these elements generate $\bar{\dd}^{(n),0}_{d^n}$.
\end{proof}


\subsection{Classical limit of the commutative subalgebras $\A(\bar{s})$}
$\ $

  Let $\U^{(n),+}_{q,d}$ be the subalgebra of $\U^{(n)}_{q,d}$ generated by $\{e_{i,k}\}_{i\in [n]}^{k\in \ZZ}$.
 In~\cite{FT}, we introduced certain `large' commutative subalgebras
 $\A(\bar{s})$ of $\U^{(n),+}_{q,d}$ via the shuffle realization $\Psi:\U^{(n),+}_{q,d}\iso S$.
  We refer the interested reader to~\cite{FT} for a definition of
 the shuffle algebra $S$ and its subalgebras $\A(\bar{s})$, where
 $\bar{s}=(s_0,s_1,\ldots,s_{n-1})\in (\CC^\times)^{[n]}$
 satisfy $s_0s_1\cdots s_{n-1}=1$ and are \emph{generic}.
 Let $\diag_n\subset \MM_n$ be the subspace of diagonal matrices.

\begin{prop}\label{consequence 4}
  For $d\ne \sqrt{1}$ and a \emph{generic} $\bar{s}=(s_0,\ldots,s_{n-1})$
 satisfying $s_0\cdots s_{n-1}=1$, the isomorphism $\theta^{(n)}_d$ identifies
 the $q\to 1$ limit of $\A(\bar{s})$ with the universal enveloping algebra
 of the commutative Lie subalgebra $\underset{k>0}\bigoplus \diag_n\otimes Z^k$
 of $\bar{\dd}^{(n),0}_{d^n}$.
\end{prop}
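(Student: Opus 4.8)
The plan is to transport the problem through the shuffle realization $\Psi:\U^{(n),+}_{q,d}\iso S$ and analyze the $q\to 1$ limit directly in the shuffle presentation, since the subalgebras $\A(\bar s)$ are defined there. First I would recall from~\cite{FT} the explicit description of $\A(\bar s)$ as the span of certain distinguished shuffle elements indexed by the parameters $\bar s=(s_0,\ldots,s_{n-1})$ with $s_0\cdots s_{n-1}=1$; these are the elements whose pairing / specialization conditions single out a commutative family. The goal is to identify the $q\to 1$ degeneration of these generators, pushed forward under $\theta^{(n)}_d$, with elements of the abelian Lie subalgebra $\bigoplus_{k>0}\diag_n\otimes Z^k\subset \bar\dd^{(n),0}_{d^n}$.

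The key computational step is to track, for each imaginary degree $l\delta$ and each $k>0$, the top-order term of the relevant shuffle generator under the isomorphism $\theta^{(n)}_d$. In the classical limit $U(\ddot u^{(n)}_d)$, the element $e^{v}_k$ (or the appropriate limiting generator of $\A(\bar s)$) lands in the imaginary-degree component, and by Theorem~\ref{main 1} its image lies in $(\bar\dd^{(n),0}_{d^n})_{(l\delta;k)}$. I would compute the diagonal matrix-coefficient of this image explicitly: the genericity of $\bar s$ together with the constraint $s_0\cdots s_{n-1}=1$ should force the image to be a nonzero diagonal element $\sum_j \lambda_j E_{j,j}\otimes Z^{k}$, i.e. to lie in $\diag_n\otimes Z^k$ with $k>0$ (the positivity of $k$ coming from $\A(\bar s)\subset\U^{(n),+}_{q,d}$, whose shuffle elements have nonnegative $Z$-degree, with the degree-zero part being scalar). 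The commutativity on the Lie-algebra side is automatic: distinct $\diag_n\otimes Z^k$ commute in $\bar\dd^{(n),0}_{d^n}$ since the bracket of two such elements is again diagonal of positive $Z$-degree but the matrix commutator $[\diag_n,\diag_n]=0$, and the cocycles $\phi^{(1)},\phi^{(2)}$ vanish on positive $Z$-degrees. This recovers the commutativity of $\A(\bar s)$ in the limit, consistent with its being commutative before the limit.

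The remaining point is a dimension/surjectivity count: I must show that the $q\to 1$ limit of $\A(\bar s)$ fills out \emph{all} of $\bigoplus_{k>0}\diag_n\otimes Z^k$, not a proper subspace. Here I would use that $\A(\bar s)$ is a `large' (maximal commutative) subalgebra in the sense of~\cite{FT}, so that in each graded piece its dimension matches that of $\diag_n$; combined with the genericity of $\bar s$, which guarantees that the leading diagonal coefficients of the $n$ distinct generators in a fixed $Z$-degree are linearly independent (a Vandermonde-type nonvanishing in the $s_i$), surjectivity onto $\diag_n\otimes Z^k$ follows. The main obstacle I anticipate is precisely the explicit extraction of the leading diagonal term under the composite $\theta^{(n)}_d\circ(\text{classical limit})\circ\Psi^{-1}$: one must carefully match the shuffle-integral formulas of~\cite{FT} with the matrix-coefficient normalizations of $\theta^{(n)}_d$ from Theorem~\ref{main 1}, and verify that the genericity condition on $\bar s$ translates into the nondegeneracy needed for the span to be exactly $\diag_n$ in each positive $Z$-degree. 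Once the leading-term computation and the accompanying linear-independence statement are established, identifying the enveloping algebra of $\bigoplus_{k>0}\diag_n\otimes Z^k$ as the classical limit of $\A(\bar s)$ is immediate.
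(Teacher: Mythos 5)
Your proposal has the right skeleton (use the shuffle-realization generators of $\A(\bar{s})$, show their normalized $q\to 1$ limits land under $\theta^{(n)}_d$ in $\diag_n\otimes Z^k$, then get spanning from a Vandermonde-type nondegeneracy in the generic $s_i$), and the grading observation is sound: the generators of $\A(\bar{s})$ in $Q$-degree $k\delta$ are homogeneous of total degree $0$, so any limit is forced into $(\dd^{(n),0}_{d^n})_{(k\delta;0)}=\diag_n\otimes Z^k$. But the step you yourself flag as ``the main obstacle I anticipate'' is not a deferrable verification --- it \emph{is} the proof, and you give no mechanism for carrying it out. One cannot compute $\Psi^{-1}$ of the shuffle elements $F'_{i,k}$ of~\cite[Theorem 3.3]{FT} and pass to the limit by brute force, and genericity of $\bar{s}$ does not by itself ``force'' the image to be any particular nonzero diagonal element. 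The paper obtains the explicit formula $\theta^{(n)}_d(\bar{F}_{i,k})=\mu_k\, e_i(a_1A_1(d^k),\ldots,a_nA_n(d^k))\otimes Z^k$ (with $a_i=s_0\cdots s_{i-1}$) by an indirect route with three specific external inputs that your plan never identifies: (i) the $i=0$ generators are identified with elements of the horizontal Heisenberg subalgebra $\h^{h}$ via~\cite[Corollary 3.12]{FT} and~\cite[Theorems 3.2, 3.5]{T2}; (ii) the classical limits of the Heisenberg subalgebras and of Miki's automorphism $\varpi$ are computed first (Lemmas~\ref{consequence 2} and~\ref{consequence 3}), which yields $\theta^{(n)}_d(\bar{F}_{0,k})=\mu_k\, I_n\otimes Z^k$; (iii) the generators with $i>0$ are produced from $F_{0,k}$ by the shift-operator trick: multiplication of a shuffle element by $\prod_j x_{i-1,j}/x_{i,j}$ is realized as the adjoint action of the explicit operators $L_{i-1;k}L_{i;-k}$ built from the elements $\bar{h}'_{j,\pm r}$ of Lemma~\ref{shift operators}, whose images under $\theta^{(n)}_d$ are known. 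Without an input of this kind, ``explicit extraction of the leading diagonal term'' has no starting point.

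There is a second, smaller gap in your surjectivity argument. You appeal to $\A(\bar{s})$ being ``large/maximal commutative'' to claim its graded dimensions match $\dim\diag_n=n$; what is actually needed (and what the paper uses) is the sharper statement that $\A(\bar{s})$ is a \emph{polynomial} algebra on the $n$ generators $F'_{0,k},\ldots,F'_{n-1,k}$ in each degree $k\delta$. Moreover, even granting $n$-dimensionality before the limit, the graded dimension of the limit subalgebra is not automatic: the $F'_{i,k}$ themselves have no finite nonzero $q\to 1$ limit and must be rescaled by explicit powers of $(q-1)$ (namely $F_{i,k}=(q-1)^{kn-1}F'_{i,k}$), an issue your proposal omits entirely, and limits of linearly independent elements can collapse. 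Ruling out that collapse is exactly the Vandermonde computation you anticipate (Lemmas~\ref{consequence 5}(b) and~\ref{consequence 6}), but it can only be run once the explicit diagonal formula from step (iii) above is in hand; it cannot be extracted from maximality or genericity in the abstract.
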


\begin{proof}[Proof of Proposition~\ref{consequence 4}]
$\ $

  According to the main result~\cite[Theorem 3.3]{FT}, the algebra $\A(\bar{s})$ is a
 polynomial algebra in the generators $\{F'_{i,k}\}_{0\leq i\leq n-1}^{k\in \NN}$,
 where $F'_{i,k}$ is the coefficient of $(-\mu)^{n-i}$ in $F_k^\mu(\bar{s})$ defined via
  $$F_k^\mu(\bar{s}):=\frac
    {\prod_{i\in [n]} \prod_{1\leq j\ne j'\leq k}
     (x_{i,j}-q^{-2}x_{i,j'})\cdot \prod_{i\in [n]}(s_0\cdots s_i\prod_{j=1}^k x_{i,j}-\mu\prod_{j=1}^k x_{i+1,j})}
    {\prod_{i\in [n]} \prod_{1\leq j,j'\leq k} (x_{i,j}-x_{i+1,j'})}\in S_{k\delta}.$$

  First, we compute the $q\to 1$ limit of $\A(\bar{s})_{\delta}$.
 Choose $\beta_1\in \ZZ$ such that the $q\to 1$ limit of
 $(q-1)^{\beta_1}\cdot F'_{0,1}$ is well-defined and is
 non-zero\footnote{\ According to~\cite[Lemma 3.4]{T2}, we have $\beta_1=n-1$.}.
 Define $F_{i,1}:=(q-1)^{\beta_1}F'_{i,1}$ and let
 $\bar{F}_{i,1}$ denote the $q\to 1$ limit of $F_{i,1}$ (if it exists).
  According to~\cite[Corollary 3.12]{FT}, the element $F_{0,1}$ is a non-zero multiple
 of the first generator $h^{h}_1$ of the Heisenberg subalgebra $\h^{h}$.
 Combining this with Lemmas~\ref{consequence 2} and~\ref{consequence 3}, we see that
 $\theta^{(n)}_d(\bar{F}_{0,1})=\mu_1\cdot I_n\otimes Z$ for some $\mu_1\in \CC^\times$.

  For $1\leq i\leq n$, define
 $a_i:=s_0\cdots s_{i-1}\in \CC^\times,\
  A_i(d):=\sum_{j=1}^n d^{1-n\delta_{j,i}} E_{j,j}\in \MM_n$,
 and let $e_i(y_1,\ldots, y_n)$ be the $i$th elementary
 symmetric function in the variables $\{y_j\}_{j=1}^n$.

\begin{lem}\label{consequence 5}
 (a) The limit $\bar{F}_{i,1}$ is well-defined and
 $\theta^{(n)}_d(\bar{F}_{i,1})=\mu_1 e_i(a_1A_1(d),\ldots,a_nA_n(d))\otimes Z$.

\noindent
 (b) The limits $\{\bar{F}_{i,1}\}_{i=0}^{n-1}$ are linearly independent and
 $\{\theta^{(n)}_d(\bar{F}_{i,1})\}_{i=0}^{n-1}$ span $\diag_n\otimes Z$.
\end{lem}

\begin{proof}[Proof of Lemma~\ref{consequence 5}]
$\ $

 (a) It suffices to show that the image of the $q\to 1$ limit of
 $\frac{x_{i-1,1}}{x_{i,1}}F_{0,1}$ under $\theta^{(n)}_d$ equals
 $\mu_1 A_i(d)\otimes Z$. Recall the elements
 $\bar{h}'_{i,\pm 1}\in \mathrm{span}_\CC\langle \bar{h}_{0,\pm 1},\ldots, \bar{h}_{n-1,\pm 1}\rangle$
 from Lemma~\ref{shift operators} such that
 $[\bar{h}'_{i,1}, \bar{e}_{j,l}]=\delta_{i,j} \bar{e}_{j,l\pm 1}$
 for any $j\in [n], l\in \ZZ$.
 Since $\Psi(e_{j,l})=x_{j,1}^l$, we see that
 the $q\to 1$ limit of $\frac{x_{i-1,1}}{x_{i,1}}F_{0,1}$ equals
 $\ad(\bar{h}'_{i-1,1})\ad(\bar{h}'_{i,-1})(\bar{F}_{0,1})$.
  Combining the equality
 $$\theta^{(n)}_d(\bar{h}'_{i,\pm 1})=
   \left(\frac{d^{\pm (2n-i)}}{d^{\pm n}-1}(E_{1,1}+\cdots+E_{i,i})+
         \frac{d^{\pm (n-i)}}{d^{\pm n}-1}(E_{i+1,i+1}+\cdots+E_{n,n})\right)\otimes D^{\pm 1}$$
 with $\theta^{(n)}_d(\bar{F}_{0,1})=\mu_1 I_n\otimes Z$, we find
 $\theta^{(n)}_d(\ad(\bar{h}'_{i-1,1})\ad(\bar{h}'_{i,-1})(\bar{F}_{0,1}))=\mu_1 A_i(d)\otimes Z$
 as claimed.

 (b) Let $C(d)$ be an $n\times n$ matrix whose rows are the
 diagonals of $\{e_i(a_1A_1(d),\ldots,a_nA_n(d))\}_{i=0}^{n-1}$.
 If $d\ne \sqrt[n]{1}$ and $a_i\ne a_j\ \mathrm{for}\ i\ne j$
 (which is the case for generic $\bar{s}$),
 then $\det(C(d))\ne 0$ due to the Vandermonde determinant.
 The result follows.
\end{proof}

  Let us generalize the above result to $k>1$.
 According to~\cite[Theorems 3.2, 3.5]{T2}, we have
  $$\Psi\left(\exp\left(\sum_{r=1}^\infty a_r(d,q)\varpi(h^\perp_{0,r})c^{-r}\right)\right)=
    \sum_{k=0}^\infty (q-1)^{kn}b_k(d,q)F'_{0,k}c^{-k},$$
 where $c$ is a formal variable,
 the $q\to 1$ limits $\bar{a}_r(d)$ and $\bar{b}_k(d)$ of the constants
 $a_r(d,q)$ and $b_k(d,q)$ are nonzero for $d\ne 0$,
 $h^\perp_{0,r}\in \spa_\CC\langle h_{0,-r},\ldots,h_{n-1,-r}\rangle$ are defined via
 $\varphi(h^\perp_{0,r},h_{i,r})=\delta_{i,0}$ with the bilinear
 form $\varphi$ given by
 $\varphi(h_{i,-r},h_{j,s})=\delta_{r,s}\cdot \frac{b_n(i,j;-r)}{q-q^{-1}}$.
 Following our proof of Lemma~\ref{consequence 2}, we see that
 $h^\perp_{0,r}=(q-1)\lambda_r(d,q)h^v_{-r}$ and the $q\to 1$ limit of $\lambda_r(d,q)$ is nonzero.
 Combining this with Lemmas~\ref{consequence 2} and~\ref{consequence 3}, we find
  $\theta^{(n)}_d\left(q\to 1\ \mathrm{limit\ of}\ (q-1)^{-1}\varpi(h^\perp_{0,r})\right)=\bar{c}_r(d)\cdot I_n\otimes Z^r$,
 where $\bar{c}_r(d)\ne 0$ for $d\ne 0,\sqrt{1}$.
 Define $F_{i,k}:=(q-1)^{kn-1}F'_{i,k}$ and let $\bar{F}_{i,k}$
 denote the $q\to 1$ limit of $F_{i,k}$ (if it exists). We also set
 $\mu_r:=\bar{a}_r(d)\bar{c}_r(d)/\bar{b}_r(d)\in \CC^\times$.

 The above discussion implies that
 $\theta^{(n)}_d(\bar{F}_{0,k})=\mu_k\cdot I_n\otimes Z^k$ for any $k\in \NN$.


\begin{lem}\label{consequence 6}
  The limit $\bar{F}_{i,k}$ is well-defined and
 $\theta^{(n)}_d(\bar{F}_{i,k})=\mu_k e_i(a_1A_1(d^k),\ldots,a_nA_n(d^k))\otimes Z^k$.
 Moreover, the elements $\{\theta^{(n)}_d(\bar{F}_{i,k})\}_{i=0}^{n-1}$
 are linearly independent and span $\diag_n\otimes Z^k$.
\end{lem}

\begin{proof}[Proof of Lemma~\ref{consequence 6}]
$\ $

  To prove the first statement, it suffices to show
\begin{equation}\label{one}
 \theta^{(n)}_d\left(q\to 1\ \mathrm{limit\ of}\
 \frac{\prod_{j=1}^k x_{i-1,j}}{\prod_{j=1}^k x_{i,j}}F_{0,k}\right)=
 \mu_k A_i(d^k)\otimes Z^k\
 \mathrm{for\ any}\ 1\leq i \leq n.
\end{equation}
  Recall the elements
 $\bar{h}'_{i,\pm k}\in \mathrm{span}_\CC\langle \bar{h}_{0,\pm k},\ldots,\bar{h}_{n-1,\pm k}\rangle$
 from Lemma~\ref{shift operators} such that
 $[\bar{h}'_{i,\pm k}, \bar{e}_{j,l}]=\delta_{i,j} \bar{e}_{j,l\pm k}$ for any $j\in [n], l\in \ZZ$
 and the polynomials $P_{k,k}$ introduced in our proof of Theorem~\ref{main 1}.
  Define $L_{i;\pm k}\in \mathrm{End}(\ddot{u}^{(n),\geq}_{d})$ via
 $L_{i;\pm k}=P_{k,k}(\ad(\bar{h}'_{i,\pm 1}),\ldots, \ad(\bar{h}'_{i,\pm k}))$.
 Then, the $q\to 1$ limit of $\prod_{j=1}^k \frac{x_{i-1,j}}{x_{i,j}}\cdot F_{0,k}$
 equals $L_{i-1;k}L_{i;-k}(\bar{F}_{0,k})$.
  To derive~(\ref{one}), one needs to apply the formula
 $$\theta^{(n)}_d(\bar{h}'_{i,\pm k})=
   \left(\frac{d^{\pm (2n-i)k}}{d^{\pm nk}-1}(E_{1,1}+\cdots+E_{i,i})+
   \frac{d^{\pm (n-i)k}}{d^{\pm nk}-1}(E_{i+1,i+1}+\cdots+E_{n,n})\right)\otimes D^{\pm k}$$
 together with the identity
 $P_{k,k}(\frac{d^{kn}-1}{d^n-1}, \frac{d^{2kn}-1}{d^{2n}-1},\cdots, \frac{d^{k^2n}-1}{d^{kn}-1})
  =e_k(1,d^n,\ldots,d^{(k-1)n})=d^{\frac{k(k-1)n}{2}}$.

  The linear independence of $\theta^{(n)}_d(\bar{F}_{i,k})$ is proved completely
 analogously to Lemma~\ref{consequence 5}(b).
\end{proof}

  It remains to note that Proposition~\ref{consequence 4} follows
 from Lemma~\ref{consequence 6} by induction on $k$.
\end{proof}


\medskip

\end{document}